\documentclass[smallextended,envcountsect,envcountsame]{svjour3}
\usepackage{graphicx}
\usepackage{mathrsfs}
\usepackage{fix-cm}
\usepackage{amsmath}
\usepackage{amssymb}
\usepackage{latexsym}
\usepackage{dsfont}
\usepackage{xcolor}
\usepackage{cite}
\usepackage{dsfont}
\usepackage{enumitem}

\def\disp{\displaystyle}

\def\Limsup{\mathop{{\rm Lim}\,{\rm sup}}}

\def\tto{\;{\lower 1pt \hbox{$\rightarrow$}}\kern -10pt
\hbox{\raise 2pt \hbox{$\rightarrow$}}\;}

\def\Hat{\widehat}
\def\hat{\widehat}
\def\Tilde{\widetilde}
\def\Bar{\overline}
\def\ra{\rangle}
\def\la{\langle}
\def\ve{\varepsilon}
\def\B{\mathbb{B}}

\def\R{\mathbb{R}}
\def\N{\mathbb{N}}

\def\ox{\bar{x}}
\def\oy{\bar{y}}
\def\ooy{\bar{\y}}

\def\gph{\operatorname{gph}}
\def\epi{\mbox{\rm epi}\,}

\def\dom{\mbox{\rm dom}\,}

\def\cl{\mbox{\rm cl}\,}

\def\inter{\mbox{\rm int}\,}

\def\dn{\downarrow}

\def\tilde{\widetilde}
\def\ph{\varphi}
\def\emp{\emptyset}
\def\st{\stackrel}
\def\oR{\Bar{\R}}
\def\N{\mathbb{N}}

\def\al{\alpha}

\setcounter{equation}{0}
\newcounter{count}

\newcommand{\Intf}[1]{\mathrm{E}_{#1}}
\newcommand{\Intfset}[1]{\mathrm{E}_{#1}}
\newcommand{\IntfLp}[1]{\mathrm{I}_{#1}}

\DeclareMathOperator*{\esssup}{ess\,sup}

\DeclareMathOperator{\1}{\mathds{1}}
\newcommand{\T}{T}
\newcommand{\Rex}{\overline{\mathbb{R}}}
\let\epsilon\varepsilon
\DeclareMathAlphabet{\mathpzc}{OT1}{pzc}{m}{it}

\def\v{\mathpzc{v}}
\def\w{\mathpzc{w}}
\def\x{\mathpzc{x}}
\def\y{\mathpzc{y}}
\def\z{\mathpzc{z}}
\def\X{{\R^n}}
\def\Y{\R^{m}}

\def\Leb{\textnormal{L}}
 \def\Ze{\mathcal{Z}}
 \newcommand{\dmu}{\mu(dt)}
 
\begin{document}

\title{Generalized Sequential Differential Calculus for Expected-Integral Functionals
\thanks{Research of the first author was partially supported by the USA National Science Foundation under grants DMS-1512846 and DMS-1808978, by the USA Air Force Office of Scientific Research under grant \#15RT04, and by the Australian Research Council under Discovery Project DP-190100555. Research of the second author was partially supported by grants: Fondecyt Regular 1190110 and  Fondecyt Regular 1200283.}}

\author{Boris S. Mordukhovich \and \mbox{Pedro P\'erez-Aros}}

\institute{Boris S. Mordukhovich \at Department of Mathematics, Wayne State University, Detroit, Michigan 48202, USA\\ \email{boris@math.wayne.edu} \\
\and Pedro P\'erez-Aros \at Instituto de Ciencias de la Ingenier\'ia, Universidad de O'Higgins, Rancagua, Chile\\
\email{pedro.perez@uoh.com}}

\dedication{Dedicated to Terry Rockafellar, in high esteem}

\date{Received: date / Accepted: date}

\maketitle

\begin{abstract} Motivated by applications to stochastic programming, we introduce and study the expected-integral functionals, which are mappings given in an integral form depending on two variables, the first a finite dimensional decision vector and the second one an integrable function.  The main goal of this paper is to establish sequential versions of Leibniz's rule for regular subgradients by employing and developing appropriate tools of variational analysis.

\keywords{Variational analysis \and generalized differentiation \and stochastic programming \and expected-integral functionals \and sequential calculus}\vspace*{-0.05in}

\subclass{Primary: 49J53, 90C15 \and Secondary: 49J52}\vspace*{-0.2in}
\end{abstract}

\section{Introduction}\label{intro}

Stochastic programming is a branch of optimization which deals with problems under uncertainty with some probabilistic information about the given data. For this class of problems the random phenomena are modeled by using a probability measure space that represents all the possible outcomes, where various classes of integral functionals and set-valued mappings replace random objective functions and constraints; see, e.g., \cite{sdr} for more details and references. For example, given a measure space $(T,\mathcal{A},\mu)$, a random cost function $\ph_t(x)$, and a constraint set $\Omega$, a stochastic program can be formulated as
\begin{align*}
\min\int_T\ph_t(x)\dmu\;\mbox{ subject to }\;x\in\Omega.
\end{align*}
Applying tools of variational analysis together with subdifferential extensions of Leibniz's rule, first-order necessary (and sufficient in some cases) optimality conditions for this problem are formulated in the form
\begin{align*}
0\in\int_T\partial\ph_t(x)\dmu+N(x;\Omega)
\end{align*}
in terms of appropriate subdifferential for nonsmooth functions and normal cones for sets. We refer the reader to \cite{chp19a,chp19b,chp20,mp2020,mor-sag18,mor-sag19} for recent results in this direction and their applications. Note that hereafter the integral of a set-valued mapping is understood in the sense of Aumann \cite{aum}; see below.

Consequently, deriving second-order optimality and stability conditions as well as developing some numerical methods in stochastic programming require the use of generalized differentiation for set-valued mappings. In our approach we relay on {\em coderivatives} of set-valued mappings introduced in Mordukhovich \cite{m80}. While coderivatives have been broadly used in many aspects of deterministic variational analysis, optimization, control theory, etc. (see, e.g., the books \cite{m06,m18,rw} and the references therein), we are not familiar with their applications to stochastic programming. Motivated by such applications, we intend to study coderivatives of set-valued integrals, which unavoidably appear in stochastic programming and related problems. A natural goal in this direction is to obtain a kind of Leibniz's rule for evaluating coderivatives of set-valued integrals via coderivatives of mappings under the integral sign.\vspace*{0.03in}

The present paper is the first part of our study, and we concentrate here on subdifferentiation of appropriate {\em expected-integral functionals}. Such functionals are defined in the form
\begin{equation*}
\Intfset{\varphi}(x,\y):=\int_T\varphi_t(x,\y(t))\dmu,
\end{equation*}
where $x\in\R^n$ and $y\in\Leb^1(T;\R^m)$; see Section~\ref{section3} for the more precise definition and discussion. Note that the minimization of $\Intfset{\varphi}$ is clearly related to {\em two-stage stochastic programming}. Indeed, the integral of the normal integrand applies to a deterministic (first stage) decision variable and a random (second stage) decision variable, where at the second stage the agent has full information and is constrained  to use integrable strategies. However, we have also in mind some other applications of expected-integral functionals; in particular, to problems related to dynamic programming.

To reach our goals, we begin with deriving appropriate versions of Leibniz's rule for regular/Fr\'echet subgradients of $\Intfset{\varphi}$. Since the domain space $\R^n\times \Leb^1(T;\R^m)$ of $\Intfset{\varphi}$ is not Asplund, there exist no results of the required type in either pointwise or fuzzy formats. Achievements of this paper include desired calculus rules in {\em sequential} forms, which are appropriate for our further generalized differentiation theory and applications to stochastic programming. Recall here that the ``sequential" terminology is used in variational analysis to indicate calculus rules and optimality conditions formulated via sequences converging to the reference points versus those stated at the points themselves, which may not be accessible without additional assumptions. To develop the aforementioned sequential calculus, we employ a {\em variational approach} and first obtain sequential necessary optimality conditions for new notions of {\em robust minima}, which are certainly of their own interest. Establishing the latter conditions requires in turn the use and developments of the theories of measurable multifunctions and normal integrals, as well as elaborating techniques of variational analysis and generalized differentiation.\vspace*{0.03in}

The rest of the paper is organized as follow. Section~\ref{section:notation} contains some {\em preliminaries} from variational analysis including the theory of measurable multifunctions and their integrals that are broadly used in the paper. Section~\ref{sec:meas} deals with the notion of {\em graph measurability} and verifies this property for regular subgradient mappings of normal integrands.

In Section~\ref{section3} we introduce and investigate new notions of {\em expected-integral functionals} for normal integrands of two variables and of their {\em $p$-robust minima}. The main result here establishes {\em sequential necessary optimality conditions} for $p$-robust local minimizers in terms of regular subgradients. The last Section~\ref{Section:FuzzyCalculus} is the culmination of the paper, where we obtain two general versions of the {\em sequential Leibniz rule} for expected-integral functionals.\vspace*{-0.2in}

\section{Preliminaries from Variational Analysis}\label{section:notation}

Throughout this paper we use standard notation of variational analysis and generalized differentiation; see, e.g., \cite{m06,rw}. Recall that $\oR:=[-\infty,\infty]$ is the extended real line, and thus a function $\ph\colon\R^n\to\oR$ is extended-real-valued; see, e.g., \cite[page~1]{rw} for the standard rules to deal with infinity. The symbol $\|\cdot\|$ stands to indicate the Euclidean norm on any finite-dimensional space under consideration. Given $x\in\X$ and $r>0$, the closed ball centered at $x$ with radius $r$ is denoted by $\mathbb{B}_r(x)$, while the closed unit ball is simply labeled as $\mathbb{B}$. For a set $\Omega\subset\X$, the symbol $x\overset{\Omega}{\to}\ox$ signifies that $x\to\ox$ with $x\in\Omega$. The {\em indicator function} $\delta_\Omega\colon\X\to\Rex$ of $\Omega$ is defined as $\delta_\Omega(x):=0$ for $x\in\Omega$ and $\delta_\Omega(x):=\infty$ otherwise.\vspace*{0.03in}

Consider a set-valued mapping/multifunction $F\colon\R^n\tto\R^m$ with the {\em domain} and {\em graph} given by
\begin{equation*}
\dom F:=\big\{x\in\R^n\;\big|\;F(x)\ne\emp\} \;\mbox{ and }\;\gph F:=\big\{(x,y)\in\R^n\times\R^m\;\big|\;y\in F(x)\big\},
\end{equation*}
respectively. The {\em Painlev\'e-Kuratowski outer limit} of $F$ as $x\to\ox$ is defined by
\begin{equation}\label{pk}
\Limsup_{x\to\ox}F(x):=\big\{v\in\R^m\big|\;\exists\,\mbox{ seqs. }\;x_k\to\ox,\;v_k\to v\;\mbox{ s.t. }\;v_k\in F(x_k)\big\}.
\end{equation}

Let $\Omega\subset\R^n$ be a set with $\ox\in\Omega$. The {\em regular/Fr\'echet normal cone} to $\Omega$ at $\ox$ is given via the standard upper limit ``$\limsup$" by
\begin{equation}\label{rnc}
\Hat N(\ox;\Omega):=\Big\{x^*\in\R^n\;\Big|\;\limsup_{x\st{\Omega}{\to}\ox}\frac{\la x^*,x-\ox \ra}{\|x-\ox\|}\le 0\Big\}
\end{equation}
with $\Hat N(\ox;\Omega):=\emp$ if $\ox\notin\Omega$. The {\em limiting/Mordukhovich normal cone} to $\Omega$ at $\ox\in\Omega$ is defined via \eqref{pk} by
\begin{equation}\label{lnc}
N(\ox;\Omega):=\Limsup_{x\to\ox}\Hat N(x;\Omega)
\end{equation}
with $N(\ox;\Omega):=\emp$ if $\ox\notin\Omega$. We refer the reader to the books \cite{m06,m18,rw} for these and related subdifferential constructions defined below.

Considering next an extended-real-valued function $\ph\colon\R^n\to\oR$, we associate with it the {\em domain}, i.e., the set $\dom \varphi :=\{  x \in \mathbb{R}^n : f(x) < +\infty  \}$, and the {\em epigraph}
\begin{equation*}
\epi\ph:=\big\{(x,\al)\in\R^{n+1}\;\big|\;\al\ge\ph(x)\big\}.
\end{equation*}
It is said that $\ph$ is {\em proper} if $\dom\ph\ne\emp$. Given $\ox\in\dom\ph$ and based on the normal cones \eqref{rnc} and \eqref{lnc} to the epigraph of $\ph$ at $(\ox,\ph(\ox))$, we define the {\em regular subdifferential} and {\em limiting subdifferential} of $\ph$ at $\ox$ by
\begin{equation}\label{rsub}
\Hat\partial\ph(\ox):=\big\{x^*\in\R^n\;\big|\;(x^*,-1)\in\Hat N\big((\ox,\ph(\ox));\epi\ph\big)\big\},
\end{equation}
\begin{equation}\label{lsub}
\partial\ph(\ox):=\big\{x^*\in\R^n\;\big|\;(x^*,-1)\in N\big((\ox,\ph(\ox));\epi\ph\big)\big\},
\end{equation}
respectively. In the books \cite{m06,m18,rw} and the references therein the reader can find equivalent analytic representations of the subdifferentials \eqref{rsub} and \eqref{lsub}, available calculus rules, and various applications.\vspace*{0.05in}

Next we proceed, following the book \cite{rw}, with recalling the required definitions and preliminary facts from the theory of measurable multifunctions and normal integrands. Throughout the paper,  $(T,\mathcal{A},\mu)$ is a complete finite measure space. As in \cite{rw}, the integral of a measurable extended-real-valued function $\alpha\colon T\to\Rex$ is defined by
\begin{align}\label{def:integral}
\int\limits_T\alpha(t)\dmu:=\int_T\max\big\{\alpha(t),0\big\}\dmu+\int_T\min\big\{\alpha(t),0\big\}\dmu,
\end{align}
with the convention that $\infty +(-\infty)=\infty$.

The {\em characteristic function} of a set $A\in\mathcal{A}$ is given by
\begin{align*}
\1_A (t):=\left\{\begin{array}{ccl}
&1&\text{ if }\;t\in A,\\
&0&\text{ if }\;t\in T\backslash A.
\end{array}\right.
\end{align*}
To avoid confusions, we use in what follows the special font as $\v,\w,\x,\y,\z$, etc. to signify vector-valued measurable functions defined on $T$.

For any $p\in[1,\infty]$ we denote as usual by $\Leb^p({\T},\X)$, with the norm $\|\cdot\|_p$, the set of all (equivalence classes by the relation equal almost everywhere) measurable functions $\x\colon T\to\X$ such that $\|\x\|^p$ is integrable for $p\in[1,\infty)$, and the set of essentially bounded measurable functions for $p=\infty$. Points in $\X$ are identified with constant functions in $\Leb^p(T,\X)$, and thus for a point $x\in\X$ and a function $\x\in\Leb^p(T,\X)$ we use the expressions
\begin{align*}
\|x-\x\|_p&=\left(\int_T\|x-\x(t)\|^p\dmu\right)^{1/p}\text{ for }\;p\in[1,\infty),\\
\|x-\x\|_\infty&=\esssup\limits_{t\in T}\|x-\x(t)\|.
\end{align*}

Considering further a set-valued mapping $M\colon T\tto\R^n$, we say that $M$ is {\em measurable} if $M^{-1}(U)\in\mathcal{A}$ for every open set $U\subset\R^n$, where
$M^{-1}(U):=\{t\in T\;|\;M(t)\cap U\ne\emp\}$. The {\em Aumann integral} of a set-valued mapping $M\colon T\tto\R^n$ over a measurable set $A\in\mathcal{A}$ is defined by
\begin{equation}\label{aum}
\int_A M(t)\dmu:=\Big\{\int_A\x^*(t)\dmu\;\Big|\;\x^*\in {\Leb}^1(\T,\X)\textnormal{ and }\x^*(t)\in M(t)\text{ a.e.}\Big\}.
\end{equation}

Next we recall that $\ph\colon T\times\R^n\to\Rex$ is a {\em normal integrand} if the multifunction $t\to\epi\ph_t$ is measurable with closed values. By the completeness of the measure space, this can be equivalently described as follows: $\ph$ is $\mathcal{A}\otimes\mathcal{B}(\mathbb{R}^n)$-measurable and for every $t\in T$ the function $\ph_t:=\ph(t,\cdot)$ is lower semicontinuous (l.s.c.) on $\R^n$; see, e.g., \cite[Corollary~14.34]{rw}. In addition, we say that the function $\ph$ is proper if $\ph_t$ is proper for all $t\in T$. If $\ph_t$ is a convex for all $t\in T$, we say that $\ph$ is a {\em convex normal integrand}.

Let us now formulate two known integration results dealing with normal integrands. The first proposition verifies the possibility of interchanging the infimum and the integral signs; see, e.g., \cite[Theorem~14.60]{rw}.

\begin{proposition}[interchanging between minimization and integration]\label{dualityRock} Given a normal integrand $\ph\colon T\times\X\to\oR$ and $p\in [1,+\infty)$.  Then we have the equality
\begin{equation*}
\inf_{\x\in\textnormal{L}^p({\T},\X)}\int_T\ph_t\big(\x(t)\big)\dmu=\int_T\inf_{x\in\X}\ph_t(x)\dmu.
\end{equation*}
provided that  $\int_T\ph_t(\x_0(t))\dmu<+\infty$ for at least one $\x_0 \in \Leb^p(T,\X)$.
\end{proposition}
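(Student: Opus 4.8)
The plan is to establish the two inequalities separately, with essentially all the work going into ``$\le$''. Set $m(t):=\inf_{x\in\X}\ph_t(x)$, and begin by recording two facts. First, the function $m\colon T\to\oR$ is $\mathcal A$-measurable: for every $\al\in\R$ the set $\{t\mid m(t)<\al\}$ is the projection onto $T$ of the $\mathcal A\otimes\mathcal B(\X)$-measurable set $\{(t,x)\mid\ph_t(x)<\al\}$, hence lies in $\mathcal A$ by the completeness of $(T,\mathcal A,\mu)$ (equivalently, invoke the standard measurability of inf-projections of normal integrands). Second, $\int_T m(t)\dmu<+\infty$: indeed $m(t)\le\ph_t(\x_0(t))$ for all $t$, while by \eqref{def:integral} and the standing hypothesis one has $\int_T\max\{\ph_t(\x_0(t)),0\}\dmu<+\infty$; hence $\int_T\max\{m(t),0\}\dmu<+\infty$, so $\int_T m\dmu$ is well defined and lies in $[-\infty,+\infty)$, and $m(t)<+\infty$ for $\mu$-a.e.\ $t$. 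With these in hand, ``$\ge$'' is immediate: for every $\x\in\Leb^p(T,\X)$ we have $\ph_t(\x(t))\ge m(t)$ pointwise, so integrating and taking the infimum over $\x$ gives $\inf_\x\int_T\ph_t(\x(t))\dmu\ge\int_T m\dmu$.

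For ``$\le$'' fix $\ve>0$ and $N\in\N$, and consider the multifunction $D_N(t):=\{x\in\X\mid\ph_t(x)\le\max\{m(t)+\ve,-N\}\}$. It has closed values since $\ph_t$ is l.s.c., it is nonempty-valued for every $t$ by the very definition of $m(t)$ as an infimum, and its graph is $\mathcal A\otimes\mathcal B(\X)$-measurable by the first fact together with the $\mathcal A\otimes\mathcal B(\X)$-measurability of $\ph$; hence $D_N$ is measurable and, by the measurable selection theorem for closed-valued measurable multifunctions (valid here thanks to completeness of $\mu$), it admits a measurable selection $\x_N(t)\in D_N(t)$. Since $\x_N$ need not lie in $\Leb^p(T,\X)$, I would truncate: put $\x_N^k(t):=\x_N(t)$ on $B_k:=\{t\mid\|\x_N(t)\|\le k\}$ and $\x_N^k(t):=\x_0(t)$ on $A_k:=T\setminus B_k$. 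As $\mu$ is finite, $\x_N^k\in\Leb^p(T,\X)$; moreover $\ph_t(\x_N^k(t))\le\max\{m(t)+\ve,-N\}$ on $B_k$ and $\ph_t(\x_N^k(t))=\ph_t(\x_0(t))\le\max\{\ph_t(\x_0(t)),0\}$ on $A_k$, so
\begin{equation*}
\int_T\ph_t\big(\x_N^k(t)\big)\dmu\le\int_{B_k}\max\{m(t)+\ve,-N\}\dmu+\int_{A_k}\max\{\ph_t(\x_0(t)),0\}\dmu.
\end{equation*}

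To finish, I would pass to two nested limits. Letting $k\to\infty$, we have $B_k\uparrow T$ and $\mu(A_k)\to0$, so the second term tends to $0$ by absolute continuity of the integral of the integrable function $\max\{\ph_t(\x_0(t)),0\}$, while the first term tends to $\int_T\max\{m(t)+\ve,-N\}\dmu$ by dominated convergence (the integrand is integrable: its positive part is $\le\max\{m(t),0\}+\ve$ and its negative part is $\le N$, with $\mu(T)<\infty$). Hence for each $N$ there is $k=k(N)$ with $\int_T\ph_t(\x_N^{k(N)}(t))\dmu\le\int_T\max\{m(t)+\ve,-N\}\dmu+\ve$. Letting now $N\to\infty$, monotone convergence applied to $\min\{\max\{-m(t)-\ve,0\},N\}\uparrow\max\{-m(t)-\ve,0\}$ gives $\int_T\max\{m(t)+\ve,-N\}\dmu\downarrow\int_T(m(t)+\ve)\dmu=\int_T m\dmu+\ve\,\mu(T)$, with the right-hand side interpreted as $-\infty$ when $\int_T m\dmu=-\infty$. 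Therefore $\inf_\x\int_T\ph_t(\x(t))\dmu\le\int_T m\dmu+\ve\,\mu(T)$, and letting $\ve\dn0$ (trivially so in the case $\int_T m\dmu=-\infty$) yields ``$\le$'' and the claimed equality.

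I expect the combined selection-and-truncation step to be the crux: one must both extract a near-minimizing measurable selection of $D_N$ and then replace it by an $\Leb^p$-function without losing control of the integral. This is precisely where the ``decomposability'' of $\Leb^p$ over a finite measure space enters, and it is what forces the two nested limits (first in the truncation level $k$, then in the lower cutoff $N$) together with the separate care needed when $\int_T m\dmu=-\infty$. The ancillary measurability statements for $m$ and for $\gph D_N$, though routine, genuinely rest on the completeness of $(T,\mathcal A,\mu)$ and should be invoked explicitly rather than glossed over.
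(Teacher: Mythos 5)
Your proof is correct: the measurability of $t\mapsto\inf_x\ph_t(x)$, the measurable selection from the approximate-minimizer level sets $D_N(t)$, and the truncation against $\x_0$ (exploiting decomposability of $\Leb^p$ over the finite measure space) are exactly the ingredients of the standard argument, and the two nested limits with the $-N$ cutoff correctly handle the case $\int_T\inf_x\ph_t(x)\dmu=-\infty$. Note that the paper itself offers no proof here — it quotes this as a known result from \cite[Theorem~14.60]{rw} — and your argument is essentially a faithful reconstruction of the proof given there.
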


The second preliminary result, taken from \cite[Lemma~37]{gp}, concerns the convergence under the integral sign.

\begin{proposition}[\bf convergence under integral sign]\label{lemma:convergenciaL1} Consider a normal integrand $\ph\colon T\times\X\to\oR$ such that the function $t\to\inf_{x\in\X}\ph_t(x)$ is integrable on $T$. Given a sequence $\{\x_k\}\subset\textnormal{L}^p(T,\X)$ with $\x_k\overset{\textnormal{L}^p}{\to}\x$ as $k\to\infty$ and
\begin{equation*}
\lim_{k\to\infty}\int_T\ph\big(t,\x_k(t)\big)\dmu=\int_T\ph\big(t,\x(t)\big)\dmu\in\R,
\end{equation*}
we have that $\disp\lim_{k\to\infty}\int_T\big|\ph\big(t,\x_k(t)\big)-\ph\big(t,\x(t)\big)\big|\dmu=0$.
\end{proposition}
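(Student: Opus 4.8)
The plan is to recast the claim as a Scheff\'e-type convergence result. Put $m(t):=\inf_{x\in\X}\ph_t(x)$; this is a measurable function (being the infimal value function of a normal integrand) which is integrable on $T$ by hypothesis, hence real-valued a.e. Since $\ph$ is a normal integrand on the complete space $(T,\mathcal A,\mu)$, it is $\mathcal A\otimes\mathcal B(\R^n)$-measurable, so the compositions $t\mapsto\ph(t,\x_k(t))$ and $t\mapsto\ph(t,\x(t))$ are $\mathcal A$-measurable, and both are bounded below by $m$. Thus $h_k(t):=\ph(t,\x_k(t))-m(t)\ge 0$ and $h(t):=\ph(t,\x(t))-m(t)\ge 0$ are nonnegative measurable functions; moreover $h\in\Leb^1(T,\R)$ because $\int_T\ph(t,\x(t))\dmu\in\R$, and, discarding the at most finitely many indices for which $\int_T\ph(t,\x_k(t))\dmu=+\infty$, we have $h_k\in\Leb^1(T,\R)$ with $\int_T h_k\dmu\to\int_T h\dmu<\infty$. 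Since $|h_k(t)-h(t)|=|\ph(t,\x_k(t))-\ph(t,\x(t))|$ a.e., it remains to prove that $\int_T|h_k-h|\dmu\to 0$.

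I would establish this via the subsequence principle: because a real sequence tends to a limit iff every subsequence has a further subsequence tending to that limit, it suffices to produce, out of any subsequence of $\{h_k\}$, a further subsequence $\{h_{k_j}\}$ with $\int_T|h_{k_j}-h|\dmu\to 0$. Given such a subsequence, the $\Leb^p$-convergence $\x_k\to\x$ lets us pass to a further subsequence with $\x_{k_j}(t)\to\x(t)$ for a.e. $t\in T$. For these $t$, lower semicontinuity of $\ph_t$ gives $\liminf_j\ph(t,\x_{k_j}(t))\ge\ph(t,\x(t))$, that is, $\liminf_j h_{k_j}(t)\ge h(t)$ a.e.

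Now set $v_j(t):=\min\{h_{k_j}(t),h(t)\}$, so that $0\le v_j\le h\in\Leb^1(T,\R)$ and, using $\liminf_j\min\{h_{k_j}(t),h(t)\}=\min\{\liminf_j h_{k_j}(t),h(t)\}$ together with the inequality just obtained, $\liminf_j v_j(t)=h(t)$ a.e. Fatou's lemma then yields $\int_T h\dmu=\int_T\liminf_j v_j\dmu\le\liminf_j\int_T v_j\dmu$, while $\int_T v_j\dmu\le\int_T h\dmu$ for every $j$ because $v_j\le h$; hence $\int_T v_j\dmu\to\int_T h\dmu$. Finally, the elementary identity $|a-b|=a+b-2\min\{a,b\}$ for $a,b\ge 0$ gives $\int_T|h_{k_j}-h|\dmu=\int_T h_{k_j}\dmu+\int_T h\dmu-2\int_T v_j\dmu\to\int_T h\dmu+\int_T h\dmu-2\int_T h\dmu=0$, which completes the subsequence argument and hence the proof.

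The crux — and the reason the integral-convergence hypothesis cannot be dropped — is that $\ph_t$ is only lower semicontinuous, so a.e. convergence $\x_{k_j}(t)\to\x(t)$ yields only the one-sided bound $\liminf_j h_{k_j}(t)\ge h(t)$ and never $h_{k_j}(t)\to h(t)$; it is precisely the interplay of this bound with the ``mass conservation'' $\int_T h_k\dmu\to\int_T h\dmu$ that makes the truncation/Scheff\'e step $\int_T v_j\dmu\to\int_T h\dmu$ go through. The remaining points are routine: the subsequence passage from $\Leb^p$-convergence to a.e.-convergence, and the measurability of $m(\cdot)$ and of $t\mapsto\ph(t,\x_k(t))$, which I would justify from the characterization of normal integrands on a complete measure space recalled in Section~\ref{section:notation}.
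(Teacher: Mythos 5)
Your proof is correct. Note that the paper itself does not prove this proposition: it is stated as a preliminary fact imported from Giner--Penot \cite[Lemma~37]{gp}, so there is no in-paper argument to compare against. Your self-contained proof via the generalized Scheff\'e lemma is sound in every step: the reduction to the nonnegative integrands $h_k=\ph(\cdot,\x_k(\cdot))-m$ and $h=\ph(\cdot,\x(\cdot))-m$ (legitimate because the integrability of $m$ forces the negative parts to be integrable, so finiteness of the integrals gives $h_k,h\in\Leb^1(T,\R)$); the subsequence principle combined with passage from $\Leb^p$- to a.e.-convergence; the one-sided bound $\liminf_j h_{k_j}\ge h$ from lower semicontinuity of $\ph_t$; Fatou applied to $v_j=\min\{h_{k_j},h\}$ dominated by $h$; and the identity $|a-b|=a+b-2\min\{a,b\}$ together with the convergence of $\int_T h_{k_j}\dmu$ to $\int_T h\dmu$. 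This is essentially the same mechanism used in the cited source, and you correctly identify why the integral-convergence hypothesis is indispensable given that only lower semicontinuity of $\ph_t$ is available.
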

\vspace*{-0.25in}

\section{Graph Measurability of Subgradient Mappings}\label{sec:meas}\vspace*{-0.05in}

This section deals with the notion of {\em graph measurability} of set-valued mappings, where a particular attention is paid to regular subdifferential graphs of normal integrands that are of special interest in the paper. The graph measurability is understood in the following sense.

\begin{definition}[\bf graph measurability]\label{graph-mes} {\em A set-valued mapping $M\colon T\tto\R^n$  is said to be {\sc graph measurable} if $\gph M\in\mathcal{A}\otimes\mathcal{B}(\mathbb{R}^n)$, where ${\cal B}(\R^n)$ is the Borel $\sigma$-algebra, i.e., the $\sigma$-algebra generated by all open sets of $\R^n$.}
\end{definition}

Since the measure space $(T,\mathcal{A},\mu)$ is assumed to be complete, we can easily observe that a multifunction $M$ with {\em closed values} is graph measurable if and only if it is measurable in the standard sense of Section~\ref{section:notation}. Due to this remark and the closed-graph property of the limiting normal cone \eqref{lnc}, it follows from \cite[Theorems~14.26 and 14.60]{rw} that the set-valued mapping
\begin{equation*}
t\to\gph\partial\ph_t:=\big\{(x,x^\ast)\in\R^{2n}\;\big|\;x^\ast\in\partial\ph_t(x)\big\}
\end{equation*}
generated by the limiting subdifferential \eqref{lsub} of the normal integrand $\ph\colon T\times\R^n\to\oR$ is graph measurable. However, such a device does not work in the case of the regular subgradient mapping \eqref{rsub} for which the values $\gph\Hat\partial\ph_t$ is rarely closed. Nevertheless, the next theorem establishes the desired graph measurability of the regular subgradient mapping that plays a significant role in our subsequent analysis and applications.

\begin{theorem}[\bf graphical measurability of regular subgradient mappings]\label{lemma_measurability_reg_sub} Let $\ph\colon T\times\X\to\Rex$ be a normal integrand. Then the multifunction
\begin{equation*}
t\to\gph\Hat\partial\ph_t:=\big\{(x,x^\ast)\in\R^{2n}\;\big|\;x^\ast\in\Hat\partial\ph_t(x)\big\}
\end{equation*}
is graph measurable on $T$.
\end{theorem}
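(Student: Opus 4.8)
The plan is to express $\gph\Hat\partial\ph_t$ as a countable set-theoretic combination of sets that are known to be measurable, exploiting the analytic ("$\limsup$") description of the regular subdifferential together with the measurability of $\ph$ and of its epigraphical multifunction. First I would recall that $x^\ast\in\Hat\partial\ph_t(x)$ admits the characterization
\begin{equation*}
x^\ast\in\Hat\partial\ph_t(x)\iff\ph_t(x)\in\R\ \text{ and }\ \liminf_{u\to x}\frac{\ph_t(u)-\ph_t(x)-\la x^\ast,u-x\ra}{\|u-x\|}\ge 0,
\end{equation*}
which rewrites, via a standard $\ve$--$\dd$ argument, as: for every $j\in\N$ there is $i\in\N$ such that $\ph_t(u)\ge\ph_t(x)+\la x^\ast,u-x\ra-\tfrac1j\|u-x\|$ for all $u\in\B_{1/i}(x)$. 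Because the inequality is closed in $u$ and $\R^n$ is separable, the quantifier "$\forall u\in\B_{1/i}(x)$" may be replaced by "$\forall u\in\Q^n\cap\B_{1/i}(x)$" without changing the set, as long as we also record the finiteness $\ph_t(x)<\infty$ (and, automatically, $\ph_t(x)>-\infty$ since the inequality with $x^\ast$ forces it locally — or one simply intersects with $\{(t,x)\;|\;\ph_t(x)>-\infty\}$, which is in $\mathcal{A}\otimes\mathcal{B}(\R^n)$ by measurability of $\ph$).

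Next I would pass to the joint variable. For fixed $j,i$ and a fixed $u\in\Q^n$, the set
\begin{equation*}
\Big\{(t,x,x^\ast)\in T\times\R^n\times\R^n\;\Big|\;\|u-x\|\le\tfrac1i\ \Rightarrow\ \ph_t(u)\ge\ph_t(x)+\la x^\ast,u-x\ra-\tfrac1j\|u-x\|\Big\}
\end{equation*}
lies in $\mathcal{A}\otimes\mathcal{B}(\R^{2n})$: the map $(t,x)\mapsto\ph_t(x)$ is $\mathcal{A}\otimes\mathcal{B}(\R^n)$-measurable by definition of a normal integrand, the map $(t,x)\mapsto\ph_t(u)=\ph(t,u)$ is measurable in $t$ for fixed $u$, and the remaining terms are continuous in $(x,x^\ast)$; the implication is just a union of the complement of a measurable set with another measurable set. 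Intersecting over all $u\in\Q^n$ (a countable intersection), then taking the union over $i\in\N$, then the intersection over $j\in\N$, and finally intersecting with $\{(t,x,x^\ast)\;|\;\ph_t(x)<\infty\}$, produces exactly $\{(t,x,x^\ast)\;|\;x^\ast\in\Hat\partial\ph_t(x)\}$ as a member of $\mathcal{A}\otimes\mathcal{B}(\R^{2n})$. Since $\mathcal{B}(\R^{2n})=\mathcal{B}(\R^n)\otimes\mathcal{B}(\R^n)$, this is precisely the assertion that $t\to\gph\Hat\partial\ph_t$ is graph measurable in the sense of Definition~\ref{graph-mes}.

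The only genuine subtlety — the step I expect to be the main obstacle to get exactly right — is the reduction from the uncountable "for all $u$ in a ball" to a countable condition. One must verify that replacing $u$ by rational points preserves the set in both directions: the forward direction is trivial, and the reverse direction uses lower semicontinuity of $\ph_t$ in $u$ (so the inequality, which is an "$\ge$" with an l.s.c. left-hand side and a continuous right-hand side, is preserved under limits $u_k\to u$ with $u_k\in\Q^n$) together with density of $\Q^n$; here it is convenient to keep the closed ball $\B_{1/i}(x)$ and let the index $i$ run so that the open-ball neighborhood in the definition is recovered as $\bigcup_i$. A secondary point worth a line is that $\|u-x\|$ may vanish when $u=x$, in which case the displayed inequality reads $\ph_t(x)\ge\ph_t(x)$ and is harmless; and that the finiteness constraint $\ph_t(x)\in\R$ must be carried along explicitly, since the $\limsup$/$\liminf$ description of $\Hat\partial$ presupposes $\ox\in\dom\ph_t$. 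With these points addressed, the rest is the routine bookkeeping of countable Boolean operations on sets in a product $\sigma$-algebra. $\h$
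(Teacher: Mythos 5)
Your overall strategy --- writing $\{(t,x,x^\ast)\,|\,x^\ast\in\Hat\partial\ph_t(x)\}$ as a countable Boolean combination of product-measurable sets extracted from the $\liminf$ characterization of $\Hat\partial$ --- is sound and is essentially the skeleton of the paper's argument. But the step you yourself single out as the main subtlety, replacing ``for all $u\in\B_{1/i}(x)$'' by ``for all $u\in\Q^n\cap\B_{1/i}(x)$'', is resolved incorrectly, and the gap is fatal as written. You assert that the inequality $\ph_t(u)\ge\ph_t(x)+\la x^\ast,u-x\ra-\tfrac1j\|u-x\|$ cuts out a set that is ``closed in $u$'' because the left-hand side is l.s.c.\ and the right-hand side continuous. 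Lower semicontinuity goes the wrong way here: for $f$ l.s.c.\ and $g$ continuous it is $\{u\,|\,f(u)\le g(u)\}$ that is closed, while $\{u\,|\,f(u)\ge g(u)\}$ need not be. Concretely, take $n=1$ and $\ph_t(u):=-|u|$ for $u\in\{\sqrt{2}/k\,|\,k\in\N\}$, $\ph_t(u):=0$ otherwise (independent of $t$, so it is a normal integrand). Every rational $u$ gives $\ph_t(u)=0\ge-\tfrac{1}{j}|u|$, so your rational test accepts $x^\ast=0$ at $x=0$ for every $j$ and every ball; yet $\liminf_{u\to 0}\ph_t(u)/|u|=-1$, so $0\notin\Hat\partial\ph_t(0)$. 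Hence your countable intersection over $u\in\Q^n$ describes a strictly larger set than $\gph\Hat\partial\ph_t$, and the identification of the measurable set with the graph fails.

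The paper avoids exactly this trap: instead of sampling the domain at rational points, it takes a Castaing representation $\{(\x_k(t),\alpha_k(t))\}_{k\in\N}$ of the closed-valued measurable multifunction $t\mapsto\epi\ph_t$ and shows that the infimum $\inf\{\ph_t(w)-\ph_t(x)-\la x^\ast,w-x\ra+\epsilon\|w-x\|\;|\;w\in\inter\B_\gamma(x)\}$ coincides with the countable infimum over $k$ of the same expression with $(w,\ph_t(w))$ replaced by $(\x_k(t),\alpha_k(t))$. This works because the infimum of an l.s.c.\ function plus a continuous perturbation can be computed over any dense subset of the \emph{closed epigraph}, where the relevant expression is continuous in $(w,\alpha)$, whereas it cannot in general be computed over a dense subset of the domain. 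Your proof is repairable by substituting this epigraphical discretization (a countable dense family of measurable selections of $\epi\ph_t$) for the rational grid; the surrounding bookkeeping with $\bigcap_j\bigcup_i$ and with the finiteness constraint $\ph_t(x)\in\R$ then goes through, and the Castaing selections also supply the $t$-measurability that your fixed rational $u$ was providing.
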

\begin{proof} Let us split the proof into the following three claims.\\[1ex]
{\bf Claim~1:} {\em We can always assume that $\ph$ is a proper normal integrand}. Indeed, consider the two multifunctions
\begin{equation*}
t\to D(t):=\dom\ph_t\;\mbox{ and }\;t\to L(t):=\big\{x\in\X\;\big|\;\ph_t(x)=-\infty\big\},
\end{equation*}
which both are measurable on $T$ due to \cite[Propositions~14.28 and 14.33]{rw}, respectively. Thus the set
\begin{equation*}
\Tilde T:=\big\{t\in T\;\big|\;\ph_t\;\text{ is proper}\big\}=\dom D\backslash\dom L
\end{equation*}
is a measurable subset of $T$ by the measurability of the set valued functions $D$ and $L$. Furthermore, denoting $S(t):=\gph\Hat\partial\ph_t$ and using \cite[Corollary~2.29]{m06}, we have that $S(t)\ne\emp$ if and only if $\ph_t$ is proper. Since the properness of $\ph$ means the properness of $\ph_t$ for each $t$ from the measurable set under consideration, this is verified for all $t\in\Tilde T$. Thus it is sufficient to prove the measurability of $S$ over $\tilde{T}$, and we suppose in what follows that $\Tilde T=T$ without loss of generality.\\[1ex]
{\bf Claim~2:} {\em Given positive numbers $\epsilon$ and $\gamma$, define the function
\begin{align*}
g_{\epsilon,\gamma}(t,x,x^\ast):=\left\{\begin{array}{cl}
\inf\left\{\Delta_\epsilon(t,w,x,x^\ast)\;\big|\;w\in\inter\mathbb{B}_\gamma(x)\right\}&\text{ if }\;\ph(t,x)<\infty,\\
-\infty&\text{ if }\;\ph(t,x)=\infty,
\end{array}\right.
\end{align*}
where $\Delta_\epsilon(t,w,x,x^\ast):=\ph(t,w)-\ph(t,x)-\langle x^\ast,w-x\rangle+\epsilon\|w-x\|$. Then we have that $g_{\epsilon,\gamma}$ is $\mathcal{A}\otimes\mathcal{B}(\X)\otimes\mathcal{B}(\X)$}-measurable. To verify this statement, let $\{(\x_k,\alpha_k)\}_{k\in\N}$  be a Castaing representation of $\epi\ph_t$ (see, e.g., \cite[Theorem~14.5]{rw}), i.e., $\{(\x_k,\alpha_k)\}_{k\in\N}$ is a sequence of measurable functions such that
\begin{align*}
 \cl\big\{\big(\x_k(t),\alpha_k(t)\big)\;\big|\;k\in\N\big\}=\epi\ph_t\;\text{ for all }\;t\in T.
\end{align*}
Then for each $k\in\N$ define the family of the extended-real-valued functions
\begin{align*}
g^k_{\epsilon,\gamma}(t,x,x^\ast):=\hspace{-0.1cm}\alpha_k(t)-\ph_t(x)-\hspace{-0.1cm}\langle x^\ast,\x_k(t)-x\rangle+\epsilon\|\x_k(t)-x\|\hspace{-0.1cm}+\delta_{{\rm int}\,\mathbb{B}_\gamma (x)}\big(\x_k(t)\big),
\end{align*}
which are measurable on $T$ as sums of measurable functions. We claim that
\begin{equation*}
g_{\epsilon,\gamma}(t,x,x^\ast)=\inf_{k\in\N}g^k_{\epsilon,\gamma}(t,x,x^\ast).
\end{equation*}
Indeed, it follows from $(\x_k(t),\alpha_k(t))\in\epi\ph_t$ that $g^k_{\epsilon,\gamma} (t,x,x^\ast)\ge g_{\epsilon,\gamma}(t,x,x^\ast)$. Picking now any point $w\in\dom\ph_t$ with $w\in\inter \mathbb{B}_\gamma(x)$ (if such a point does not exist, the claimed equality holds trivially) and $\eta>0$, find $k\in\N$ such that
\begin{equation*}
\ph(t,w)\ge\alpha_k(t)-\eta,\;|\langle x^\ast,w-\x_k(t)\rangle|\le\eta,\;\mbox{ and }\;\epsilon\|w-\x_k(t)\|\le\eta.
\end{equation*}
Consequently, we arrive at the estimates
\begin{align*}
\ph(t,w)-\ph(t,x)-\langle x^\ast,w-x\rangle+\epsilon\|w-x\|&\ge g^k_{\epsilon,\gamma}(t,x,x^\ast)-3\eta\\&\ge\inf_{k\in\N}g^k_{\epsilon,\gamma}(t,x,x^\ast)-3\eta.
\end{align*}
Since $w$ and $\eta$ were chosen arbitrarily, it ensures that
\begin{equation*}
g_{\epsilon,\gamma}(t,x,x^\ast)\ge\inf_{k\in\N}g^k_{\epsilon,\gamma}(t,x,x^\ast)
\end{equation*}
and thus verifies this claimed statement.\\[1ex]
{\bf Claim~3:} {\em The multifunction $t\to\gph\Hat\partial\ph_t$ is graph measurable.} Remembering the notation $S(t):=\gph\Hat\partial\ph_t$, we have the representation
\begin{align*}
\gph S=\bigcap\limits_{\epsilon\in(0,1)\cap\mathbb{Q}}\bigcup\limits_{\gamma\in(0,1)\cap\mathbb{Q}}\big\{(t,x,x^\ast)\in T\times\X\times\X\;\big|\;g_{\epsilon,\gamma}(t,x,x^\ast)\ge 0\big\}.
\end{align*}
Taking into account the measurability of the functions $g_{\epsilon,\gamma}(t,x,x^\ast)$ established in Claim~2, the latter representation yields the graph measurability of the mapping $t\to\gph\Hat\partial\ph_t$ and thus completes the proof of the theorem.
\end{proof}

Finally in this section, we present a useful result on measurable selections of graph measurable multifunctions, which may not have closed values. This makes it applicable to the regular subgradient mappings due to Theorem~\ref{lemma_measurability_reg_sub}. The result below and its proof can be found in, e.g., \cite[Theorem III.22]{cas-val}.

\begin{proposition}[\bf measurable selections of graph measurable multifunctions]\label{Prop_measurableselection}
Let $M\colon T\tto\X$ be a graph measurable multifunction with nonempty values $($with nonempty values a.e., respectively$)$. Then there exists a measurable function $\x\colon T\to\X$ such that we have the inclusion $\x(t)\in M(t)$ for all $t\in T$ $($a.e., respectively$)$.
\end{proposition}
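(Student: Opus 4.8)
The plan is to derive the statement from the classical von Neumann--Aumann measurable selection theorem, so the real task is to recall why that result applies here and where its weight lies. First I would dispose of the ``a.e.'' case: if $M(t)\ne\emp$ only off some $\mu$-null set $N$, then $N\in\mathcal{A}$ by completeness of $(T,\mathcal{A},\mu)$, the restriction of $\gph M$ to $(T\setminus N)\times\X$ still lies in $\mathcal{A}\otimes\mathcal{B}(\X)$, and any measurable selection of $M$ over $T\setminus N$, extended arbitrarily on $N$, settles the general case. So from now on I would assume $M(t)\ne\emp$ for every $t\in T$.

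The heart of the matter is the measurable projection theorem: for every $\Gamma\in\mathcal{A}\otimes\mathcal{B}(\X)$ the projection $\proj_T\Gamma:=\{t\in T\mid\exists\,x,\ (t,x)\in\Gamma\}$ lies in $\mathcal{A}$. This is precisely where completeness of $\mu$ is genuinely used: such projections are Souslin sets over $\mathcal{A}$, hence $\mu$-measurable by Choquet's capacitability theorem for analytic sets, hence in $\mathcal{A}$. Granting this, I would construct the selection by successive refinement over the dyadic grid of $\X$. Set $\Gamma^{(0)}:=\gph M$ and, passing from $\Gamma^{(m)}$ to $\Gamma^{(m+1)}$, cover $\X$ by countably many closed balls $\mathbb{B}_{2^{-(m+2)}}(x_j)$, put $A_j:=\proj_T\big(\Gamma^{(m)}\cap(T\times\mathbb{B}_{2^{-(m+2)}}(x_j))\big)\in\mathcal{A}$, disjointify into $B_j:=A_j\setminus\bigcup_{i<j}A_i$ (a countable measurable partition of $T$, a partition precisely because the sections of $\Gamma^{(m)}$ are nonempty), and let $\Gamma^{(m+1)}:=\bigcup_j\big[(B_j\times\mathbb{B}_{2^{-(m+2)}}(x_j))\cap\Gamma^{(m)}\big]\in\mathcal{A}\otimes\mathcal{B}(\X)$. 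The centres of the committed balls give measurable functions $\x_m\colon T\to\X$, and since the radii are summable these form a Cauchy sequence whose measurable limit $\x$ is the candidate selection.

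The step I expect to be the main obstacle is ensuring that $\x(t)$ actually belongs to $M(t)$ and not merely to $\cl M(t)$: the naive bisection only controls closures, since the committed balls need not be nested and the sections $\Gamma^{(m)}_t$ need not be closed. To fix this one must keep every refinement anchored to $\gph M$ --- retaining at each stage a sub-ball whose section genuinely meets $\gph M$ --- which is cleanest to organize by first writing $\gph M$ as the projection, in an auxiliary Baire-space variable, of a set with closed sections and then uniformizing the latter by a ``leftmost branch'' construction whose measurability is inherited coordinatewise from the projection theorem. Simultaneously tracking this anchoring and the measurability of all intermediate sets is the technical core; since these verifications, together with the capacitability input above, are carried out in full in \cite[Theorem~III.22]{cas-val}, we simply quote the result, noting that by Theorem~\ref{lemma_measurability_reg_sub} it applies in particular to the regular subgradient mappings $t\to\gph\Hat\partial\ph_t$, whose values need not be closed.
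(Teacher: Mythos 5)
Your proposal is correct and ends up exactly where the paper does: the paper offers no proof of this proposition at all, simply citing \cite[Theorem III.22]{cas-val} (the von Neumann--Aumann selection theorem), which is also the final move in your argument. Your additional sketch --- the measurable projection theorem via Souslin sets and capacitability as the place where completeness is used, and the observation that a naive bisection only lands in $\cl M(t)$ when the values are not closed --- is accurate and correctly identifies the technical core of the cited result, but it is supplementary to, not divergent from, the paper's approach.
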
\vspace*{-0.2in}

\section{Robust Minima of Expected-Integral Functionals}\label{section3}\vspace*{-0.05in}

In this section we define the notions of expected-integral functionals and their robust minima that are crucial for our further considerations. The main result here establishes {\em sequential} necessary conditions for robust minima of expected-integral functionals via regular subgradients of their integrands.\vspace*{0.03in}

Given a normal integrand $\varphi\colon\T\times\X\times\Y\to\Rex$, define the {\em expected-integral functional} $\Intf{\varphi}\colon\X\times\Leb^1(\T,\Y)\to\Rex$ by
\begin{align}\label{eif}
\Intf{\varphi}(x,\y):=\int_{\T}\varphi_t\big(x,\y(t)\big)\dmu,
\end{align}
where the integral is understood in the sense of \eqref{def:integral}. The name of \eqref{eif} is due to the fact that the first variable of $\Intf{\varphi}(x,\y)$ is a point in $\X$ as in the case of expected functionals, while the second variable is an integrable function as in the case of integral functionals. Recall that
\begin{equation*}
\dom \Intf{\varphi}:=\big\{(x,\y)\in\X\times\Leb^1(T,\Y)\;\big|\;\Intf{\varphi}(x,\y)<\infty\big\}.
\end{equation*}
Also, it is convenient to consider in what follows the integral functional $\IntfLp:\colon\Leb^1(\T,\X)\times\Leb^1(\T,\Y)\to\Rex $ defined by
\begin{equation}\label{eifLp}
\IntfLp{\varphi}(\x,\y):=\int_{\T}\varphi_t\big(\x(t),\y(t)\big)\dmu.
\end{equation}

The next definition is fundamental for our study. It presents a certain adaptation of the notions recently introduced in \cite{chp20}.

\begin{definition}[\bf stabilized infimum and robust minimizers]\label{ROBUSTEDLOCALMINIMUM} {\em Let $\Intf{\varphi}$ be the expected-integral functional \eqref{eif} associated with a normal integrand $\varphi\colon\T\times\X\times\Y\to\Rex$, and let $p\in[1,\infty)$.

{\bf(i)} The {\sc $p$-stabilized infimum} of $\Intf{\varphi}$ on the product set $B\times C\subset\X\times\Leb^{1}(T,\Y)$ is defined by
\begin{equation*}
\wedge_{p,B\times C}\Intf{\varphi}:=\sup\limits_{\varepsilon>0}\inf\left\{\displaystyle\int_T\varphi_t\big(\x(t),\y(t)\big)\dmu\,\,\left|\begin{array}{c}
x\in B,\,\y\in C,\,\x\in\Leb^p(T,\X),\\
\displaystyle\int_T\|\x(t)-x\|^p\dmu\le\varepsilon
\end{array}\right.\right\}.
\end{equation*}

{\bf(ii)} The infimum of $\Intf{\varphi}$ on $B\times C$ is called {\sc $p$-robust} if we have
\begin{equation*}
\wedge_{p,B\times C}\Intf{\varphi}=\inf_{B\times C}\Intf{\varphi}\in\R.
\end{equation*}
In that case we say that every minimizer of $\Intf{\varphi}$ on the set $B\times C$ is a {\sc $p$-robust minimizer} of $\Intf{\varphi}$ {\sc on} $B\times C$.

{\bf(iii)} A pair $(x,\y)\in\dom\Intfset{\varphi}$ is called a {\sc $p$-robust local minimizer} of $\Intf{\varphi}$ if it is a $p$-robust minimizer of $\Intf{\varphi}$ on some ball $\mathbb{B}_r(x)\times\mathbb{B}_r(\y)$.}
\end{definition}

From now on we impose the following {\em lower growth condition} on the normal integrand in question: there exist functions $\nu\in\Leb^1(T,\R_+)$ such that
\begin{align}\label{lower_bound_assump}
\varphi_t(v,w)\ge -\nu(t)\;\mbox{ for all }\;v\in\X\;\mbox{ and }\;w\in\Y.
\end{align}
It is not hard to check by using Fatou's lemma that condition \eqref{lower_bound_assump} ensures that the expected-integral functional \eqref{eif} is l.s.c.\ on $\X\times\Leb^1(T,\Y)$; see, e.g., \cite[Lemma~10]{gp} for more details.\vspace*{0.05in}

Before deriving the main result of this section, we present two lemmas. The first one provides well-known results about classical differentiation of integral functionals with normal integrands; see, e.g., \cite{rw} and the references therein.

\begin{lemma}[Leibniz's rules of Fr\'echet differentiation]\label{lemmadifferenciabilidadnorma} Let $\ph\colon T\times\R^n\to\R$ be a normal integrand, which is Lipschitz on an open neighborhood $U$ of $\ox$ with an integrable modulus, i.e., there exists $K\in\Leb^1(T,\R)$ such that
\begin{align*}
|\ph(t,x)-\ph(t,u)|\le K(t)\|x-u\|\;\text{ for all }\;x,u\in U\;\text{ and }\;t\in T.
\end{align*}
If $\ph_t(\cdot)$ is Fr\'echet differentiable at $\ox$ for a.e.\ $t\in T$, then the functional
\begin{equation*}
\Intf{\ph}(x):=\int_T\ph_t(x)\dmu
\end{equation*}
is Fr\'echet differentiable at $\ox$ with $\nabla\ph(\cdot,\ox)\in\textnormal{L}^1(T,\X)$, and we have
\begin{align*}
\nabla\Intf{\ph}(\ox)=\int_{T}\nabla\ph_t(\ox)\dmu.
\end{align*}
If in addition the functions $\ph_t(\cdot)$ are ${\cal C}^1$-smooth on $U$ for a.e.\ $t\in T$, then the expected  functional $\Intf{\varphi}$ is ${\cal C}^1$-smooth on $U$.
\end{lemma}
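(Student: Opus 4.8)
The plan is to reduce the integral statement to a pointwise application of the classical Leibniz rule for differentiation under the integral sign, using the integrable Lipschitz modulus $K$ to dominate difference quotients and thereby justify passing the limit inside the integral. First I would fix the reference point $\ox$ and a radius $r>0$ so that $\mathbb{B}_r(\ox)\subset U$, and for $x$ with $\|x-\ox\|$ small consider the quantity
\begin{align*}
R(x):=\frac{\Intf{\ph}(x)-\Intf{\ph}(\ox)-\int_T\langle\nabla\ph_t(\ox),x-\ox\rangle\dmu}{\|x-\ox\|}=\int_T\frac{\ph_t(x)-\ph_t(\ox)-\langle\nabla\ph_t(\ox),x-\ox\rangle}{\|x-\ox\|}\dmu,
\end{align*}
where the interchange of sum and integral is legitimate because each term is integrable: $t\mapsto\ph_t(\ox)$ and $t\mapsto\ph_t(x)$ are integrable by the growth bound together with the Lipschitz estimate $|\ph_t(x)-\ph_t(\ox)|\le K(t)\|x-\ox\|$, and $t\mapsto\nabla\ph_t(\ox)$ is integrable with $\|\nabla\ph_t(\ox)\|\le K(t)$ a.e.\ (since the Fréchet derivative of a function that is $K(t)$-Lipschitz on $U$ has norm at most $K(t)$), which also shows $\nabla\ph(\cdot,\ox)\in\Leb^1(T,\X)$ and that the linear functional $h\mapsto\int_T\langle\nabla\ph_t(\ox),h\rangle\dmu$ is well defined and bounded.

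Next I would estimate the integrand of $R(x)$. For a.e.\ $t$ the numerator has absolute value at most $|\ph_t(x)-\ph_t(\ox)|+|\langle\nabla\ph_t(\ox),x-\ox\rangle|\le 2K(t)\|x-\ox\|$, so the integrand is dominated in modulus by $2K(t)\in\Leb^1(T,\R)$ uniformly in $x\in\mathbb{B}_r(\ox)$. Now take any sequence $x_k\to\ox$ with $x_k\ne\ox$. By Fréchet differentiability of $\ph_t(\cdot)$ at $\ox$ for a.e.\ $t$, the integrand evaluated at $x_k$ tends to $0$ for a.e.\ $t$. By the Lebesgue dominated convergence theorem, $R(x_k)\to 0$. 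Since the sequence was arbitrary, $\lim_{x\to\ox}R(x)=0$, which is precisely the statement that $\Intf{\ph}$ is Fréchet differentiable at $\ox$ with $\nabla\Intf{\ph}(\ox)=\int_T\nabla\ph_t(\ox)\dmu$.

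For the final assertion, assume the $\ph_t(\cdot)$ are $\mathcal{C}^1$ on $U$ for a.e.\ $t$. Then by the first part $\Intf{\ph}$ is Fréchet differentiable at every point $x\in U$ with $\nabla\Intf{\ph}(x)=\int_T\nabla\ph_t(x)\dmu$, so it remains to check continuity of $x\mapsto\nabla\Intf{\ph}(x)$ on $U$. Fix $\bar u\in U$ and a ball $\mathbb{B}_\rho(\bar u)\subset U$; for $x\to\bar u$ we have $\|\nabla\Intf{\ph}(x)-\nabla\Intf{\ph}(\bar u)\|\le\int_T\|\nabla\ph_t(x)-\nabla\ph_t(\bar u)\|\dmu$, the integrand tends to $0$ for a.e.\ $t$ by $\mathcal{C}^1$-smoothness and is bounded by $2K(t)\in\Leb^1(T,\R)$, so dominated convergence again gives $\nabla\Intf{\ph}(x)\to\nabla\Intf{\ph}(\bar u)$. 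Hence $\Intf{\ph}\in\mathcal{C}^1(U)$. The only subtle point, and the one I would state carefully, is the measurability and integrability of $t\mapsto\nabla\ph_t(\ox)$: measurability follows since $\nabla\ph_t(\ox)$ is an a.e.-pointwise limit of the measurable difference quotients $\bigl(\ph_t(\ox+k^{-1}e_i)-\ph_t(\ox)\bigr)k$ along coordinate directions, and integrability follows from the Lipschitz bound on the derivative norm; everything else is a routine dominated-convergence argument once the $2K(t)$ envelope is in place.
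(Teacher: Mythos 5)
Your proposal is correct and follows essentially the same route as the paper's own proof: dominated convergence applied to the difference quotients with the $2K(t)$ envelope, measurability of $t\mapsto\nabla\ph_t(\ox)$ as an a.e.\ pointwise limit of measurable functions together with the bound $\|\nabla\ph_t(\ox)\|\le K(t)$ for integrability, and a second dominated-convergence argument for continuity of the gradient in the $\mathcal{C}^1$ case. No substantive differences.
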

\begin{proof} Observe first that the measurability  of the function $t\to\nabla \varphi_t(\ox)$ follows from the fact that this function can be written as the pointwise limit of a sequence of measurable functions, while the integrability follows from the estimate $\|\nabla\varphi_t(\ox)\|\le K(t)$ for a.e. $t\in T$. Now pick any sequence $x_k\overset{U}{\to}\ox$ as $k\to\infty$ and consider the sequence of the nonnegative functions
$$
f_k(t):= \bigg| \frac{\varphi_t(x_k) - \varphi_t(\ox) - \langle \nabla \varphi_t(\ox), x_k -\ox\rangle }{\| x_k-\ox\|} \bigg|.
$$
We clearly have that $f_k(t)\to 0$ as $k\to\infty$ for a.e. $t\in T$ while satisfying the upper estimate $f_k(t)\leq 2K(t)$ for such $t$. Then Lebesgue's dominated convergence theorem (see, e.g., \cite[Theorem~2.8.1]{bog}) tells us that
$$
\lim\limits_{k\to\infty}\left|\frac{\Intf{\varphi}(x_k)-\Intf{\varphi}(\ox) - \int_{\T} \langle \nabla \varphi_t(\ox), x_k-\ox \rangle \dmu }{\| x_k - \ox\|} \right|=0,
$$
which yields therefore the Fr\'echet differentiability of $\Intf{\varphi}$ at $\ox$.

It follows from the above that $\Intf{\varphi}$ is Fr\'echet differentiable on $U$. Finally, we verify that the derivative $\nabla\Intf{\varphi}$ is continuous on $U$. Indeed, take any sequence $x_k \overset{ U }{\to}\ox$ and observe that
$$
\lim_{k\to\infty}\left\|\nabla \varphi_t(\ox)-\nabla \varphi_t(x_k)\right\|= 0\;\mbox{ and }\|\nabla\varphi_t(\ox)-\nabla\varphi_t(x_k)\|\le 2K(t)
$$
for a.e. $t\in T$. Using again Lebesgue's dominated convergence theorem yields
$$\|\nabla\Intf{\varphi}(\ox)-\nabla\Intf{\varphi}(x_k)\|{\longrightarrow}0\;\text{ as  }\;k\to \infty,
$$
which completes the proof of the lemma.
\end{proof}

The second lemma establishes a {\em strong approximation} of $p$-robust minimizers of constrained expected-integral functionals by $\ve$-minimizers of a sequence of regularized unconstrained  $p$-dependent integral functionals.

\begin{lemma}[\bf approximations of $p$-robust minimizers]\label{Robustlocalminimizer} Let $p\in[1,\infty)$, and let $(\ox,\ooy) $ be a $p$-robust minimizer of the expected-integral functional \eqref{eif} on the closed set $B\times C$, where $\varphi\colon\T\times\X\times\Y\to\Rex$ is a normal integrand. Given a sequence $\varepsilon_k\dn 0$ as $k\to\infty$, let $\{(x_k,\x_k,\y_k)\big\}$ be a sequence of $\varepsilon_k$-minimizers of the function $\Psi_k\colon\X\times\Leb^1({\T},\X)\times\Leb^1(T,\Y)\to\Rex$ defined by
\begin{align}\label{functionPsi}
\Psi_k(u,\v,\w):=\IntfLp{\varphi}(\v,\w)+ k (\| \v - u\|_p)^p+ \|\ox-u\|^p+\delta_{B\times C}(u,\w),
\end{align}
where $\IntfLp{\varphi}$ is defined in \eqref{eifLp}.
Then the following assertions hold:
\begin{enumerate}[label={(\alph*)},ref={(\alph*)}]

\item[\bf(i)] $\Psi_k$ is l.s.c.\ and bounded from below on $\X\times\Leb^1({\T},\X)\times\Leb^1(T,\Y)$.

\item[\bf(ii)] $k(\|\x_k-x_k\|_p)^p\to 0$ and $\|\x_k-\ox\|_p\to 0$ as $k\to\infty$.

\item[\bf(iii)] $\IntfLp{\varphi}(\x_k,\y_k)\rightarrow \Intf{\varphi}(\ox,\ooy)$  as $k\to\infty$.
\end{enumerate}
In particular, we have the representation of $\Intf{\varphi}$ at $(\ox,\ooy)$ as follows:
\begin{equation}\label{igualdadproblemasperturbados}
\Intf{\varphi}(\ox,\ooy)=\sup\limits_{k\in\N}\inf\left\{\Psi_k(u,\v,\w)\;\bigg|\;\begin{array}{c}
u\in\X,\;\v\in\Leb^1({\T},\X),\\
\w\in\Leb^1({\T},\Y)
\end{array} \ \right\}.
\end{equation}
\end{lemma}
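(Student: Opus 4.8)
The plan is to treat the three assertions in order, using the $p$-robustness hypothesis as the engine for (ii) and (iii), and then to read off \eqref{igualdadproblemasperturbados} as a byproduct of the chain of inequalities established along the way.

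First I would prove (i). The functional $\IntfLp{\varphi}$ on $\Leb^1(\T,\X)\times\Leb^1(\T,\Y)$ is l.s.c.\ under the lower growth condition \eqref{lower_bound_assump} by the same Fatou-lemma argument cited after that condition (applied here to the two-function integrand $(v,w)\mapsto\varphi_t(v,w)$); moreover \eqref{lower_bound_assump} gives the uniform lower bound $\IntfLp{\varphi}(\v,\w)\ge-\int_T\nu(t)\dmu>-\infty$. The remaining terms of $\Psi_k$ in \eqref{functionPsi} are nonnegative: $k(\|\v-u\|_p)^p\ge 0$, $\|\ox-u\|^p\ge 0$, and $\delta_{B\times C}$ is l.s.c.\ because $B\times C$ is closed. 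A sum of l.s.c.\ functions is l.s.c., and a sum of things bounded below is bounded below, so (i) follows.

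Next, for (ii) and (iii) the key observation is that $(\ox,\ox,\ooy)$ is feasible for $\Psi_k$ (indeed $(\ox,\ooy)\in B\times C$, and the $k$-penalty term vanishes at $\v=u=\ox$), so $\inf\Psi_k\le\Psi_k(\ox,\ox,\ooy)=\Intf{\varphi}(\ox,\ooy)$. Since $(x_k,\x_k,\y_k)$ is an $\ve_k$-minimizer, this yields
\begin{equation*}
\IntfLp{\varphi}(\x_k,\y_k)+k(\|\x_k-x_k\|_p)^p+\|\ox-x_k\|^p\le\Intf{\varphi}(\ox,\ooy)+\ve_k,
\end{equation*}
with $(x_k,\y_k)\in B\times C$. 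This is the upper bound. For the matching lower bound I would use the definition of the $p$-stabilized infimum: for any fixed $\ve>0$, once $k$ is large enough that $k^{-1}\ve_k'\le\ve$ for a suitable auxiliary quantity (more precisely, bound the $p$-mass $\int_T\|\x_k(t)-x_k\|^p\dmu=(\|\x_k-x_k\|_p)^p$ by $\ve$ using the already-derived inequality $k(\|\x_k-x_k\|_p)^p\le\Intf{\varphi}(\ox,\ooy)+\ve_k-\IntfLp{\varphi}(\x_k,\y_k)+\text{(dropped nonneg.\ term)}$ together with the uniform lower bound from \eqref{lower_bound_assump}), the triple $(x_k,\x_k,\y_k)$ becomes admissible in the infimum defining $\wedge_{p,B\times C}\Intf{\varphi}$ at level $\ve$, whence $\IntfLp{\varphi}(\x_k,\y_k)\ge\inf\{\cdots\mid\text{$\ve$-constraint}\}$; letting the auxiliary $\ve$ shrink and invoking $\wedge_{p,B\times C}\Intf{\varphi}=\inf_{B\times C}\Intf{\varphi}=\Intf{\varphi}(\ox,\ooy)$ (using that $(\ox,\ooy)$ is a minimizer) forces $\liminf_{k\to\infty}\IntfLp{\varphi}(\x_k,\y_k)\ge\Intf{\varphi}(\ox,\ooy)$. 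Combining this with the upper bound displayed above gives simultaneously (iii), namely $\IntfLp{\varphi}(\x_k,\y_k)\to\Intf{\varphi}(\ox,\ooy)$, and then (ii): from the upper-bound inequality, $k(\|\x_k-x_k\|_p)^p\le\Intf{\varphi}(\ox,\ooy)+\ve_k-\IntfLp{\varphi}(\x_k,\y_k)+\|\ox-x_k\|^p$; the first three terms on the right tend to $0$ by (iii), and to control $\|\ox-x_k\|^p$ one notes $\|\ox-x_k\|^p\le\Intf{\varphi}(\ox,\ooy)+\ve_k-\IntfLp{\varphi}(\x_k,\y_k)\to 0$ as well, so $k(\|\x_k-x_k\|_p)^p\to 0$ and, a fortiori, $\|\x_k-x_k\|_p\to 0$; combined with $\|\ox-x_k\|\to 0$ and the triangle inequality $\|\x_k-\ox\|_p\le\|\x_k-x_k\|_p+\|x_k-\ox\|$ (the constant function $x_k$ has $\Leb^p$-norm $\|x_k-\ox\|\mu(T)^{1/p}$, harmless since $\mu$ is finite) this gives $\|\x_k-\ox\|_p\to 0$.

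Finally, \eqref{igualdadproblemasperturbados} is immediate from what has been assembled: on one hand $\inf\Psi_k\le\Psi_k(\ox,\ox,\ooy)=\Intf{\varphi}(\ox,\ooy)$ for every $k$, so $\sup_k\inf\Psi_k\le\Intf{\varphi}(\ox,\ooy)$; on the other hand, for any admissible $(u,\v,\w)$ with $u\in B$, $\w\in C$ we have $\Psi_k(u,\v,\w)\ge\IntfLp{\varphi}(\v,\w)+k(\|\v-u\|_p)^p\ge$ the infimum appearing in the definition of $\wedge_{p,B\times C}\Intf{\varphi}$ at the $\ve$-level with $\ve=(\|\v-u\|_p)^p$, and taking $\sup_k$ pushes this up to $\wedge_{p,B\times C}\Intf{\varphi}=\Intf{\varphi}(\ox,\ooy)$ (one argues that for each fixed admissible triple either $\|\v-u\|_p=0$, in which case $\Psi_k\ge\IntfLp{\varphi}(\v,\v)=\IntfLp{\varphi}(\v,\w)\ge\inf_{B\times C}\Intf{\varphi}$ already after identifying the constant, or $\|\v-u\|_p>0$ and $k(\|\v-u\|_p)^p\to\infty$), yielding $\sup_k\inf\Psi_k\ge\Intf{\varphi}(\ox,\ooy)$.

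\textbf{Main obstacle.} The delicate point is the lower bound in step (ii)--(iii): making precise how the $\ve_k$-minimizing triples enter the definition of the $p$-stabilized infimum. One must simultaneously (a) extract from the $\ve_k$-minimality that the $p$-mass $(\|\x_k-x_k\|_p)^p$ is small — which itself requires an a priori two-sided control on $\IntfLp{\varphi}(\x_k,\y_k)$, the lower side coming from \eqref{lower_bound_assump} — and (b) feed that smallness back into the $\sup_{\ve>0}\inf\{\cdots\}$ structure without circularity. The clean way is to first fix $\ve>0$, then use the a priori bound $\IntfLp{\varphi}(\x_k,\y_k)\ge-\|\nu\|_1$ to deduce $k(\|\x_k-x_k\|_p)^p\le\Intf{\varphi}(\ox,\ooy)+\ve_k+\|\nu\|_1$, hence $(\|\x_k-x_k\|_p)^p\le\ve$ for all large $k$, and only then compare $\IntfLp{\varphi}(\x_k,\y_k)$ with the $\ve$-level infimum; everything else is bookkeeping with the finiteness of $\mu$ and the triangle inequality.
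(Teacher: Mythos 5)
Your proposal is correct and follows essentially the same route as the paper: part (i) via Fatou's lemma, the lower growth condition \eqref{lower_bound_assump}, and closedness of $B\times C$, and parts (ii)--(iii) by sandwiching $\IntfLp{\varphi}(\x_k,\y_k)$ between the $\ve$-level stabilized infimum (the paper's $\kappa_{\eta_k}$ with $\eta_k=(\|\x_k-x_k\|_p)^p$) and $\Intf{\varphi}(\ox,\ooy)+\ve_k$, after first using the uniform lower bound $\IntfLp{\varphi}\ge-\|\nu\|_1$ to force the $p$-mass to zero --- exactly the non-circular ordering you flag as the main obstacle. The only blemish is the per-triple dichotomy in your last paragraph for the lower bound in \eqref{igualdadproblemasperturbados}, which as stated does not control $\inf_{(u,\v,\w)}\Psi_k$ for fixed $k$ (triples with $\|\v-u\|_p\to 0$ escape it); this is harmless, since the representation already follows from the chain of inequalities you established for (ii)--(iii), or from splitting at a fixed threshold $(\|\v-u\|_p)^p\le\ve$ versus $>\ve$ rather than at $0$.
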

\begin{proof} Observe first that assertion (i) is an easy consequence of Fatou's lemma, the lower semicontinuity of the integrand in \eqref{functionPsi}, and the lower growth condition \eqref{lower_bound_assump}. To verify (ii) and (iii) simultaneously, fix $k\in\N$ and $\gamma>0$ and then define the numbers
\begin{align*}
\iota_k&:=\inf\left\{\Psi_k(u,\v,\w)\;\Big|\;\begin{array}{c}
u\in\X,\;\v\in\Leb^1({\T},\X),\\
\w\in\Leb^1({\T},\Y)
\end{array} \ \right\},\\
\kappa_\gamma&:=\inf\left\{\int_{\T}\varphi_t\big(\v(t),\w(t)\big)\dmu\;\Bigg|\;\begin{array}{c}
\disp\int_{\T}\| \v(t)-u\|^p\dmu\le\gamma,\;u\in B,\\
\v\in\Leb^p({\T},\X), \text{ and }\;\w\in C
\end{array}\right\}.
\end{align*}
It follows from the lower growth condition \eqref{lower_bound_assump} that $\iota_k>-\infty $ and $\kappa_\gamma>-\infty$.  Since $(x_k,\x_k,\y_k)$ is a $\varepsilon_k$-minimizer of $\Psi_k$ (i.e., $\Psi_k(x_k,\x_k,\y_k)\le\iota_k+\epsilon_k$), and since $\iota_k\le\Intf{\varphi}(\ox,\ooy)$, we have by \eqref{lower_bound_assump} that
\begin{align*}
k\int_T\|\x_k(t)-x_k\|^p\dmu&\le\Psi_k(x_k,\x_k,\y_k) +\int_T\nu(t)\dmu\\
&\le\iota_k+\epsilon_k +\int_T\nu(t)\dmu\\
&\le\Intf{\varphi}(\ox,\ooy)+\varepsilon_1+\int_T\nu(t)\dmu<\infty,
\end{align*}
where the last inequality ensures that $\x_k\in\Leb^p(T,\X)$. Furthermore, dividing the last inequality above by $k$, we have that $\int_{\T}\|\x_k(t)-x_k\|^p\dmu\to 0$. Denoting further $\eta_k:=\int_{\T}\|\x_k(t)-x_k\|^p\dmu $ and observing that the triple $(x_k,\x_k,\y_k )$ satisfies the conditions above while taking the infimum in the definition of $\kappa_{\eta_k}$, we arrive at the estimates
\begin{equation}\label{eq:00}
\begin{array}{ll}
\kappa_{\eta_k}-\varepsilon_k&\le\disp\int_{\T}\varphi_t\big(\x_k(t),\y_k(t)\big)\dmu-\varepsilon_k\\
&\le \IntfLp{\varphi}(\x_k,\y_k)+ k (\| \x_k - x_k\|_p)^p+ \|\ox-x_k\|^p-\varepsilon_k\\
&\leq \Psi_k(x_k,\x_k,\y_k)-\varepsilon_k\\
&\le\iota_k\le\Intf{\varphi}(\ox,\ooy).
\end{array}
\end{equation}
Since $(\ox,\ooy)$ is  a $p$-robust minimizer on $B\times C$, we have that $\kappa_{\eta_k} \to \Intf{\varphi}(\ox,\ooy)$. Hence passing in \eqref{eq:00} to the limit as $k\to\infty$ gives us the convergence
\begin{equation*}
\IntfLp{\varphi}(\x_k,\y_k)  \longrightarrow \Intf{\varphi}(\ox,\ooy),\quad k\to\infty,
\end{equation*}
which implies by using \eqref{eq:00} that assertions (ii) and (iii) of the lemma hold together with the representation of $\Intf{\varphi}(\ox,\ooy)$ in \eqref{igualdadproblemasperturbados}.
\end{proof}

It is worth mentioning that all the assertions of Lemma~\ref{Robustlocalminimizer}, expect the lower semicontinuity of $\Psi_k$, remain true if we replace the lower boundedness condition  \eqref{lower_bound_assump} by the weaker assumption that the integral functional $\IntfLp{\varphi}$ from \eqref{eifLp} is bounded from below on the set $\Leb^{p}(T,\X)\times C$. Furthermore, the following example shows that the latter condition is necessary to have $p$-robust minimizers of expected-integral functionals of the type $\Intf{\varphi}$.

\begin{example} Let $[0,1]$ be equipped with the Lebesgue measure, let the normal integrand $\varphi: [0,1] \times \R\times \R \to \R$ be given by
$$
\varphi_t(x,y):=|y|-\exp(|x|),
$$
and let $B$ and  $C$ be the unit closed ball in $\X$ and $\Leb^{1}(T,\Y)$, respectively. Then it is easy to see that $\inf_{B\times C}\Intf{\varphi}\in\R$, but $\wedge_{p,B\times C}\Intf{\varphi} =-\infty$, which tells us that the lower boundedness condition \eqref{lower_bound_assump} is necessary for the $p$-robustness of the infimum of $\Intf{\varphi}$ in the sense of Definition~{\rm\ref{ROBUSTEDLOCALMINIMUM}}.
\end{example}

Now we are ready to establish a major result, which provides sequential necessary optimality conditions for robust minimizers of expected-integral functionals. The obtained result is certainly of its independent interest while playing a crucial role in deriving generalized Leibniz rules in the next section.

\begin{theorem}[\bf sequential necessary conditions for robust minimizers]\label{aproximatecalculsrules} Let $p,q\in(1,\infty)$ be such that $1/p+1/q=1$, and let $(\ox,\ooy)\in\X\times\Leb^1(T,\Y)$ be a $p$-robust local minimizer of the expected-integral functional $\Intf{\varphi}$ generated in \eqref{eif} by a normal integrand $\varphi\colon\T\times\X\times\Y\to\Rex$. Then there exist sequences $\{x_k\}\subset\X$, $\{\x_k\}\subset\Leb^p({T},\X)$, $\{\x_k^*\}\subset\Leb^q({T},\X)$, $\{\y_k\}\subset\Leb^1({T},\Y)$, and $\{\y_k^*\}\subset\Leb^\infty({T},\Y)$ satisfying the following conditions:
\begin{enumerate}[label=\alph*),ref=\alph*)]

\item[\bf(i)] $\big(\x_k^*(t),\y_k^\ast(t)\big)\in\Hat\partial\varphi_t\big(\x_k(t),\y_k(t)\big)$ for a.e.\ $t\in T$ and all $k\in\N$.

$\;$\item[\bf(ii)] $\|\ox-x_k\|\to 0$, $\|\ox-\x_k\|_p\to 0$, and $\|\ooy-\y_k\|_1\to 0$ as $k\to\infty$.

$\;\;\;\;$\item[\bf(iii)] $\left\|\int_T \x_k^*(t)\dmu\right\|\to 0$ and $\|\y_k^\ast\|_\infty\to 0$ as $k\to\infty$.

$\;$\item[\bf(iv)] $\|\x_k^*\|_q\|\x_k-x_k\|_p\to 0$ as $k\to\infty$.

\item[\bf(v)] $\disp\int_T\big|\varphi_t\big(\x_k(t),\y_k(t)\big)-\varphi_t\big(\ox,\ooy(t)\big)\big|\dmu\to 0$ as $k\to\infty$.
\end{enumerate}
\end{theorem}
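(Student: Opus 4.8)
The plan is to establish (i)--(v) by a variational argument that reduces the (infinite-dimensional, non-Asplund) optimality condition for $\Intf{\varphi}$ to a pointwise condition on the integrand. First I would invoke Lemma~\ref{Robustlocalminimizer} on a closed ball $\mathbb{B}_r(\ox)\times\mathbb{B}_r(\ooy)$, but applied to its functional $\Psi_k$ enlarged by a nonnegative convex penalty $\gamma_k\|\w-\ooy\|_1$ with $\gamma_k\downarrow 0$ chosen as $\gamma_k:=(\rho(M/k)+1/k)^{1/2}$, where $\rho(\eta):=\Intf{\varphi}(\ox,\ooy)-\kappa_\eta\downarrow 0$ is the modulus of $p$-robustness ($\kappa_\eta$ being the quantity appearing in the proof of Lemma~\ref{Robustlocalminimizer}) and $M$ is a fixed bound extracted from \eqref{igualdadproblemasperturbados} and \eqref{lower_bound_assump}. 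An inspection of that proof shows the extra term leaves all its conclusions untouched, while the associated $\varepsilon_k$-minimizers $(x_k,\x_k,\y_k)$ (with $\varepsilon_k:=1/k$) now satisfy in addition $\gamma_k\|\y_k-\ooy\|_1\le\rho(\eta_k)+\varepsilon_k\le\gamma_k^2$ with $\eta_k:=(\|\x_k-x_k\|_p)^p\le M/k$, hence $\|\y_k-\ooy\|_1\le\gamma_k\to 0$, together with $x_k\to\ox$, $\|\x_k-\ox\|_p\to 0$, $k(\|\x_k-x_k\|_p)^p\to 0$, $\x_k\in\Leb^p(T,\X)$ and $\IntfLp{\varphi}(\x_k,\y_k)\to\Intf{\varphi}(\ox,\ooy)$.

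Since $\Psi_k$ is l.s.c.\ and bounded below on the Banach space $\X\times\Leb^1(T,\X)\times\Leb^1(T,\Y)$, Ekeland's variational principle (radius $\sqrt{\varepsilon_k}$) produces a point, still written $(x_k,\x_k,\y_k)$ and $\sqrt{\varepsilon_k}$-close to the former one in the product metric, that globally minimizes $\Theta_k(u,\v,\w):=\Psi_k(u,\v,\w)+\sqrt{\varepsilon_k}(\|u-x_k\|+\|\v-\x_k\|_1+\|\w-\y_k\|_1)$; a short lower-semicontinuity estimate based on Fatou and \eqref{lower_bound_assump} shows the new triple keeps all the convergences above, and these are the sequences in the statement. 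From the Fermat rule $0\in\Hat\partial\Theta_k(x_k,\x_k,\y_k)$ with $(\v,\w)$ frozen at $(\x_k,\y_k)$, differentiating $u\mapsto\int_T k\|\x_k(t)-u\|^p\dmu$ by Lemma~\ref{lemmadifferenciabilidadnorma} (whose integrable-modulus hypothesis holds because $\x_k\in\Leb^p$) and using $\Hat N(x_k;\mathbb{B}_r(\ox))=\{0\}$ for large $k$, I would bound $\big\|pk\int_T\|\x_k(t)-x_k\|^{p-2}(\x_k(t)-x_k)\dmu\big\|\le p\|x_k-\ox\|^{p-1}+\sqrt{\varepsilon_k}\to 0$.

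The core of the argument is the passage to pointwise subgradients. Freezing instead $u=x_k$, the pair $(\x_k,\y_k)$ minimizes over $(\v,\w)$ the functional $\int_T\Phi^k_t(\v(t),\w(t))\dmu+\delta_C(\w)$, where $\Phi^k_t(a,b):=\varphi_t(a,b)+k\|a-x_k\|^p+\gamma_k\|b-\ooy(t)\|+\sqrt{\varepsilon_k}\|a-\x_k(t)\|+\sqrt{\varepsilon_k}\|b-\y_k(t)\|$ is a normal integrand and $\y_k$ lies in the $\Leb^1$-interior of $C:=\mathbb{B}_r(\ooy)$ because $\|\y_k-\ooy\|_1\to 0<r$. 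As no fuzzy/pointwise subdifferential calculus is available on $\Leb^1$, I would instead use the interchange of minimization and integration (Proposition~\ref{dualityRock}), localized by a ``bump'' perturbation that remains admissible since $\y_k\in\inter C$, to conclude that $(\x_k(t),\y_k(t))$ attains $\inf_{(a,b)}\Phi^k_t(a,b)$ for a.e.\ $t$. The finite-dimensional Fermat rule together with the sum rule (splitting off the ${\cal C}^1$ term $k\|a-x_k\|^p$ and the convex Lipschitz remainder) then gives, for a.e.\ $t$, an element
\begin{equation*}
\big(-pk\|\x_k(t)-x_k\|^{p-2}(\x_k(t)-x_k)-\sqrt{\varepsilon_k}e^1_k(t),\ -\gamma_k d_k(t)-\sqrt{\varepsilon_k}e^2_k(t)\big)=:\big(\x_k^*(t),\y_k^*(t)\big)
\end{equation*}
of $\Hat\partial\varphi_t(\x_k(t),\y_k(t))$, with $e^1_k(t),e^2_k(t),d_k(t)$ in unit balls. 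Theorem~\ref{lemma_measurability_reg_sub} makes $t\mapsto\Hat\partial\varphi_t(\x_k(t),\y_k(t))$ graph measurable, so Proposition~\ref{Prop_measurableselection} furnishes measurable selections, with $\x_k^*\in\Leb^q(T,\X)$ (since $\|\x_k^*(t)\|\le pk\|\x_k(t)-x_k\|^{p-1}+\sqrt{\varepsilon_k}$ and $(p-1)q=p$) and $\y_k^*\in\Leb^\infty(T,\Y)$ (since $\|\y_k^*(t)\|\le\gamma_k+\sqrt{\varepsilon_k}$); this is (i).

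The remaining assertions then drop out: (ii) and $\|\y_k^*\|_\infty\le\gamma_k+\sqrt{\varepsilon_k}\to 0$ are already recorded; $\big\|\int_T\x_k^*\dmu\big\|\le p\|x_k-\ox\|^{p-1}+\sqrt{\varepsilon_k}(1+\mu(T))\to 0$ completes (iii); the size bound on $\x_k^*$ yields $\|\x_k^*\|_q\|\x_k-x_k\|_p\le C\big(k(\|\x_k-x_k\|_p)^p+\sqrt{\varepsilon_k}\|\x_k-x_k\|_p\big)\to 0$, which is (iv); and (v) is Proposition~\ref{lemma:convergenciaL1} applied to the normal integrand $(a,b)\mapsto\varphi_t(a,b)$ on $T\times(\X\times\Y)$ — whose infimal function is integrable, being squeezed between $-\nu(t)$ and the integrable function $t\mapsto\varphi_t(\ox,\ooy(t))$ — using $(\x_k,\y_k)\to(\ox,\ooy)$ in $\Leb^1(T,\X\times\Y)$ and $\IntfLp{\varphi}(\x_k,\y_k)\to\Intf{\varphi}(\ox,\ooy)$. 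I expect the genuine difficulties to be twofold: carrying out the pointwise reduction of the crux step without Asplund-type calculus, which forces the route through Proposition~\ref{dualityRock}, Theorem~\ref{lemma_measurability_reg_sub} and Proposition~\ref{Prop_measurableselection}; and the delicate bookkeeping tying $\gamma_k$ to the robustness modulus $\rho$ so that $\|\y_k-\ooy\|_1\to 0$ and $\|\y_k^*\|_\infty\to 0$ hold at the same time.
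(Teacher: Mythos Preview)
Your overall variational scaffolding---penalize, apply Ekeland, interchange infimum and integration via Proposition~\ref{dualityRock} to obtain a pointwise Fermat condition, then select measurably---is the same as the paper's, and your $\gamma_k$-penalty device to force $\|\y_k-\ooy\|_1\to 0$ is a legitimate (if more laborious) alternative to the paper's simpler route of centering Ekeland at the known $\varepsilon_k^2$-minimizer $(\ox,\ox,\ooy)$, which delivers $\|\y_k-\ooy\|_1\le\varepsilon_k$ directly from the radius bound.

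There is, however, a genuine gap at the ``sum rule'' step. From $0\in\Hat\partial\Phi^k_t(\x_k(t),\y_k(t))$ you remove the $C^1$ term $k\|a-x_k\|^p$ (fine) and the convex Lipschitz remainder $g(a,b):=\sqrt{\varepsilon_k}\|a-\x_k(t)\|+\gamma_k\|b-\ooy(t)\|+\sqrt{\varepsilon_k}\|b-\y_k(t)\|$, and then claim an explicit element of $\Hat\partial\varphi_t(\x_k(t),\y_k(t))$. But the inclusion $\Hat\partial(\varphi_t+g)\subset\Hat\partial\varphi_t+\partial g$ is \emph{false} for the regular subdifferential: take $\varphi_t(a)=-|a|$, $g(a)=|a|$ at $a=0$, where $\Hat\partial(\varphi_t+g)(0)=\{0\}$ while $\Hat\partial\varphi_t(0)=\emptyset$. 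Since your norm penalties are evaluated precisely at their kinks, you are in this nonsmooth regime. What the pointwise Fermat rule plus the sum rule actually yields is $(\x_k^*(t),\y_k^*(t))\in\partial\varphi_t(\x_k(t),\y_k(t))$, the \emph{limiting} subdifferential. This is exactly why the paper's proof proceeds in two stages: Claims~3--4 establish (i)--(v) with $\partial\varphi_t$ in place of $\Hat\partial\varphi_t$, and then a separate Claim~5 passes to regular subgradients by building, for each $k$ and small $\epsilon>0$, the graph-measurable multifunction $M_k^\epsilon(t)$ of quadruples $(x,x^*,y,y^*)$ with $(x^*,y^*)\in\Hat\partial\varphi_t(x,y)$ lying $\epsilon$-close to $(\x_k(t),\x_k^*(t),\y_k(t),\y_k^*(t))$ in points, values and dual variables, noting it is a.e.\ nonempty by the very definition of $\partial$ as the outer limit of $\Hat\partial$, and invoking Theorem~\ref{lemma_measurability_reg_sub} and Proposition~\ref{Prop_measurableselection}. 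You cite those two results only to select measurably from $\Hat\partial\varphi_t(\x_k(t),\y_k(t))$ itself, but you have not shown that set is nonempty.

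A secondary point: the ``bump perturbation'' you allude to, meant to pass from minimization over $C=\mathbb{B}_r(\ooy)$ to an unconstrained pointwise problem so that Proposition~\ref{dualityRock} applies, needs to be made explicit. The paper inserts the pointwise indicator $\delta_{\mathbb{B}_{\rho(t)}(\y_k(t))}(w)$, with $\rho\in\Leb^1(T,\R_+)$ and $\int_T\rho\,d\mu<r/2$, into the integrand $h^k_t$, so that every $\w$ feasible for $\IntfLp{h^k}$ automatically lies in $C$; Proposition~\ref{dualityRock} then gives the global pointwise minimum, and the indicator is inactive at $\y_k(t)$ so it disappears in the subdifferential calculus. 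Your argument that $\y_k\in\inter C$ suffices to drop $\delta_C$ is not enough on its own, since Proposition~\ref{dualityRock} compares against the infimum over all of $\Leb^1$, not over a neighborhood.
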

\begin{proof}  We first prove the {\em limiting subdifferential version} of the theorem, where $\Hat\partial\ph_t$ is replaced by $\partial\ph_t$. This subdifferential replacement allows to employ the {\em well-developed calculus} for the limiting subdifferential \eqref{lsub} that is not available for its regular counterpart \eqref{rsub}. Then we pass to the claimed conclusions of the theorem formulated in terms of {\em regular subgradients} by using the fact that they approximate the limiting ones.

To begin with, recall that the function $x\to\|x\|^p$ with $p>1$ is continuously differentiable on $\R^n$ and its gradient satisfies the the estimates
\begin{align}\label{ineq_normp}
\|x\|^{p-1}\le\|\nabla(\|\cdot\|^p)(x)\|\le p\|x\|^{p-1} \text{ for all }\;x\in\X.
\end{align}
Picking $r\in(0,1)$ such that $(\ox,\ooy)$ is a $p$-robust minimizer on $\mathbb{B}_r(\ox)\times\mathbb{B}_r(\ooy)$, consider the function $\Psi_k$ defined in \eqref{functionPsi} with $B\times C=\mathbb{B}_r(\ox)\times\mathbb{B}_r(\ooy)$. Then representation \eqref{igualdadproblemasperturbados} from Lemma~\ref{Robustlocalminimizer} tells us that
\begin{equation*}
\Intf{\varphi}(\ox,\ooy)=\sup\limits_{k\in\N}\inf\left\{\Psi_k(u,\v,\w)\;\bigg|\;\begin{array}{c}
u\in\X,\;\v\in\Leb^1({\T},\X),\\
\w\in\Leb^1({\T},\Y)
\end{array} \ \right\}.
\end{equation*}
For all $k\in\N$, define now the positive numbers
$$
\varepsilon_k:=\left( \Intf{\varphi}(\ox,\ooy)- \inf\left\{\Psi_k(u,\v,\w)\;\bigg|\;\begin{array}{c}
u\in\X,\;\v\in\Leb^1({\T},\X),\\
\w\in\Leb^1({\T},\Y)
\end{array} \ \right\} +  \frac{1}{k} \right)^{1/2}
$$
and observe that $\varepsilon_k\dn 0$ as $k\to\infty$. Since $\Psi_k(\ox,\ox,\ooy)=\Intf{\varphi}(\ox,\ooy)$, we have that the triple $(\ox,\ox,\ooy)$ is an $\varepsilon^2_k$-minimizer of $\Psi_k$ from \eqref{functionPsi} and assume without loss of generality that $\varepsilon_k\in(0,r/2)$ for all $k\in\N$. Considering further the Banach space $\Ze:=\X\times\Leb^1({T},\X)\times\Leb^1(T,\Y)$ with the norm
\begin{align*}
\|( u,\v,\w)\|_{\Ze}:=\| u\|+\int_{{\T}}\|\v\|\dmu+\int_{{\T}}\|\w\|\dmu,
\end{align*}
we split the rest of the proof into the  following five claims.\\[1ex]
{\bf Claim~1:} {\em There exists a sequence $(x_k,\x_k,\y_k)\in\X\times\Leb^p(T,\X)\times\Leb^1(T,\Y)$ satisfying the following conditions:\vspace*{-0.05in}
\begin{enumerate}
\item[\bf(a)] $\|(\ox,\ox,\ooy)-(x_k,\x_k,\y_k)\|_{\Ze}\le\epsilon_k$ for all $k\in\N$.
\item[\bf(b)] $\Psi_k(x_k,\x_k,\y_k)+\epsilon_k\|(\ox,\ox,\ooy)-(x_k,\x_k,\y_k)\|_{\Ze}\le\Psi_k(\ox,\ox,\ooy)=\Intf{\varphi}(\ox,\ooy)$.
\item[\bf(c)] The function $\Psi_k+\epsilon_k\|\cdot-(x_k,\x_k,\y_k)\|_{\Ze}$ attains its minimum at $(x_k,\x_k,\y_k)$.
\end{enumerate}}\vspace*{-0.05in}
Indeed, applying the fundamental Ekeland variational principle (see, e.g., \cite[Theorem~2.26]{m06}) to the function $\Psi_k$ at its $\epsilon^2_k$-minimizer $(\ox,\ox,\ooy)$ over the Banach space $(\Ze,\|\cdot\|_{\Ze})$ gives us for each $k\in\N$ points $(x_k,\x_k,\y_k)\in\Ze$ satisfying assertions (a), (b), and (c) of this claim. It follows from (b) and the definition of $\Psi_k$ in \eqref{functionPsi} that we have $\x_k\in\Leb^p(T,\X)$.\\[1ex]
{\bf Claim~2:} {\em The sequence $(x_k,\x_k,\y_k)$ satisfies assertions {\rm (ii)} and {\rm(v)} of the theorem.} To verify this claim, we use the choice of the triple $(x_k,\x_k,\y_k)$ as an $\epsilon_k^2$-minimizer of $\Psi_k$ and deduce from Proposition~\ref{Robustlocalminimizer} that
\begin{equation*}
k\big(\|\x_k-x_k\|_p\big)^p\to 0\;\mbox{and}\;\IntfLp{\varphi}(\x_k,\y_k) \to  \Intf{\varphi}(\ox,\ooy) \;\mbox{as}\;k\to\infty.
\end{equation*}
It follows from Claim~1(a) that $\|x-x_k\|\to 0$ as $k\to\infty$ and $(\x_k,\y_k)\to(\ox,\ooy)$ in $\Leb^1(T,\X)\times \Leb^1(T,\Y)$ as $k\to\infty$. Thus Proposition~\ref{lemma:convergenciaL1} tells us that
\begin{align*}
\displaystyle\int_T\big|\varphi_t\big(\x_k(t),\y_k(t)\big)-\varphi_t\big(\ox,\ooy(t)\big)\big|\dmu \to 0\;\mbox{ as }\;k\to\infty,
\end{align*}
which finishes the proof of this claim.\\[1ex]
{\bf Claim~3:} {\em  Define $u^*_k(t):=\nabla \|\cdot\|^p(\x_k(t)-x_k)$ on $T$. Then for a.e.\ $t\in T$ we have the relationships
\begin{align}\label{equationmira}
-\big(ku^*_k(t),0\big)\in&\partial\varphi_t\big({\x}_k(t),\y_k(t)\big)+\mathbb{B}_{2\epsilon_k}(0)\times\mathbb{B}_{2\epsilon_k}(0),\\
\label{equationmirb}
\|u_k^*(t)\|^q\le&p^q\|\x_k(t)-x_k\|^p\;\mbox{ for all }\;k\in\N,
\end{align}
and $\|k\int_{T}u_k^*(t)\dmu\|\to 0$ as $k\to\infty$.} To prove this claim, observe first that the estimate in \eqref{equationmirb} follows directly from \eqref{ineq_normp}. Consider further and integrable function $\rho\in\Leb^1(T,\R_+)$ with $\int_T\rho(t)\dmu<r/2$ and define the normal integrand $h^k\colon T\times\X\times\Y\to\Rex$ by
\begin{align*}
 h_t^k(v,w) := &\varphi_t(v,w)+k\|v-x_k\|^p \\+ &\epsilon_k\left(\|v-\x_k(t)\|+\|w-\y_k(t)\|\right)+ \delta_{\B_{\rho(t)}(\y_k(t))}(w).
\end{align*}
We clearly have the equalities
\begin{equation}\label{eq:newdet}
\begin{aligned}
&\IntfLp{h^k} (\x_k,\y_k)+ \| \ox - x_k\|^p=\displaystyle\int_T h^k_t\big({\x}_k (t),{\y}_k(t)\big)\dmu+ \| \ox - x_k\|^p \\
& =  \IntfLp{\varphi}(\x_k,\y_k) + k \int_T \| \x_k(t) - x_k \|^p \dmu + \| \ox - x_k\|^p= \Psi_k(x_k,\x_k,\y_k).
\end{aligned}
\end{equation}
It follows from Claim~1(b) that $\IntfLp{h^k} (\x_k,\y_k)$ is finite. Employing \eqref{eq:newdet} and Claim~1(b) with $u=x_k$, we get for all $ \v\in\Leb^1({T},\X)$ and $ \w\in\Leb^1({T},\Y)$ that
\begin{equation}\label{eq2newver}
\begin{array}{ll}
\Psi_k(x_k,\x_k,\y_k) \leq\Psi_k( x_k,\v, \w)  +  \epsilon_k\| ( x_k,\v,\w)  -(x_k,\x_k,\y_k)\|_{\Ze}\\
= \disp\int_T \varphi_t\big(\v(t),\w(t)\big) \dmu + k\int_T\| \x_k(t) - x_k \|^p \dmu + \| \ox - x_k\|^p\\
\disp+\epsilon_k \left(  \int_T \| \x_k(t) - \v(t)\|  \dmu  + \int_T \| \y_k(t) - \w(t)\|  \dmu    \right)\\
\leq\IntfLp{h^k}(\v,\w) + \| \ox - x_k\|^p,
\end{array}
\end{equation}
where in the last inequality uses the inclusion
$$
\big\{ \w \in \Leb^1(T,\Y)\;\big|\;\w(t) \in \mathbb{B}_{\rho(t)} (\y_k(t)) \text{ a.e. }\big\}\subset\mathbb{B}_{r}(\ooy) =C.
$$
Then we deduce from  \eqref{eq:newdet} and \eqref{eq2newver} that
\begin{equation*}
\begin{array}{ll}
\displaystyle\int_T h^k_t\big({\x}_k (t),{\y}_k(t)\big)\dmu=\inf\left\{\int_T h^k_t\big(\v(t),\w(t)\big)\dmu\;\Big|\;\begin{array}{c}
\v\in\Leb^1({T},\X)\\
\w\in\Leb^1({T},\Y)
\end{array}\right\}\\
=\disp\int_T\inf\{ h_t^k(v,w): v\in\X,\;w\in\Y \} \dmu,
\end{array}
\end{equation*}
where the last equality is due to Proposition~\ref{dualityRock}. Since the inequality $$ h^k_t\big({\x}_k (t),{\y}_k(t)\big)\geq   \inf_{v\in\X,\;w\in\Y}h_t^k(v,w)$$ always holds, this implies that  for a.e.\ $t\in T$ the function $h^k_t(\cdot,\cdot)$ attains its minimum at $(\x_k(t),\y_k(t))$, and so $0\in\partial h^k_t(\x_k(t),\y_k(t))$ a.e.\ on $T$ by the subdifferential Fermat rule from \cite[Proposition~1.114]{m06}. Taking into account the summation structure of $h^k_t$ and employing the sum rule for the limiting subdifferential from \cite[Theorem~2.33(c)]{m06}, we verify the fulfillment of \eqref{equationmira} for a.e.\ $t\in T$. It follows further from Claim~1(c) by using $(\v,\w)=(\x_k,\y_k)$ that
\begin{align*}
&k\int_T\|\x_k(t)-x_k\|^p\dmu+\|x_k-\ox\|^p+\epsilon_k\|x_k-\ox\|\\
&=\inf\limits_{u\in\X}\left( k\int_T\|\x_k(t)-u\|^p\dmu+\|u-\ox\|^p+\epsilon_k\|u-\ox\|\right).
\end{align*}
Hence Lemma~\ref{lemmadifferenciabilidadnorma} ensures the differentiability of the function
\begin{align*}
u\to\int_T\|\x_k(t)-u\|^p\dmu.
\end{align*}
Applying again the subdifferential Fermat rule and the direct calculation by Lemma~\ref{lemmadifferenciabilidadnorma} brings us to the convergence $\|k\int_{T}u_k^*(t)\dmu\|\to 0$ as $k\to\infty$, which ends the verification of this claim.\\[1ex]
{\bf Claim~4:} {\em There exist sequences $\{\x_k^*\}\subset\Leb^q({T},\X)$ and $\{\y_k^*\}\subset\Leb^\infty({T},\Y)$ satisfying assertions {\rm(iii)} and {\rm(iv)} as well as assertion {\rm(i)} with the replacement of the regular subdifferential by its limiting counterpart.} Indeed, the measurability of all the functions in \eqref{equationmira} and the measurable selection theorem from \cite[Theorem~14.16]{rw} ensure the existence of measurable selections $(\x_k^\ast(t),\y_k^\ast(t))\in\partial\varphi_t( \x_k(t),\y_k(t))$ for a.e.\ $t\in T$ such that
\begin{align}\label{equation:7}
\big(\x_k^\ast(t),\y_k^\ast(t)\big)-\big(ku^*_k(t),0\big)\in 2\epsilon_k\mathbb{B}_{}(0,0)\;\text{ for a.e. }\;t\in T;
\end{align}
thus we get (i). Let us show that the sequence of quadruplets $(x_k,\x_k,\y_k,\x_k^\ast,\y^\ast_k)$ satisfies assertions (iii) and (iv). Indeed, it follows from \eqref{equationmirb} and \eqref{equation:7} that
\begin{align*}
\left(\int_{T}\|\x_k^*(t)\|^q\right)^{1/q}\le&k\left(\int_{T}\|u_k^*(t)\|^q\right)^{1/q} +2\epsilon_k\mu(T),\\
\|\y_k^\ast\|_\infty\le&2\epsilon_k,\\
\bigg\|\int_{T} x_k^*(t)\dmu\bigg\|\le&\bigg\|k\int_{T}u_k^*(t)\dmu\bigg\|+2\epsilon_k\mu(T)^{1/q}.
\end{align*}
This shows that $\x_k^\ast\in\Leb^p(T,\X)$ and $\y_k^\ast\in\Leb^\infty(T,\Y)$, and therefore assertion (iii) is verified. Furthermore, we have the relationships
\begin{align*}
\|\x_k^*\|_q\|\x_k-x_k\|_p\le&\left(k\left(\int_{\T}\|u_k^\ast(t)\|^q\dmu\right)^{1/q}+2\epsilon_k\mu(T)^{1/q}\right)\|x^k_\infty-x_k\|_p\\
\le&kp\big(\|\x_k-x_k\|_p\big)^{p/q}\|\x_k-x_k\|_p)\\
&+2\epsilon_k\mu(T)^{1/q}\|\x_k-x_k\|_p\\
=&k p(\|x_k -y_k\|_p)^p+2\epsilon_k\mu(T)^{1/q}\cdot\|x_k -y_k\|_p\to 0,
\end{align*}
which justify assertion (iv) and thus accomplish the proof of the theorem in the case of the limiting subdifferential in (i).\\[1ex]
{\bf Claim~5:} {\em Completing the proof of theorem}. It remains to show that the fulfillment of all the assertions of the theorem for the limiting subdifferential in (i) yields all of the claimed assertions as formulated therein, i.e., in terms of the regular subdifferential in (i). To proceed, consider the sequence of $(\x_k^*(t),\y_k^\ast(t))\in\partial\varphi_t(\x_k(t),\y_k(t))$ taken from Claim~4. For any $\epsilon>0$ define the multifunction $M_k^\epsilon\colon T\tto\mathbb{R}^{2(n+m)}$ by the equivalence $(x,x^\ast,y,y^\ast)\in M_k^\epsilon(t)\Longleftrightarrow$
\begin{eqnarray*}
\begin{array}{c}
(x^\ast,y^\ast)\in\Big\{\Hat\partial\varphi_t(x,y)\;\Big|\;|\varphi_t(x,y)-\varphi_t(\x_k(t),\;\y_k(t))|\le\epsilon,\;\|x-\x_k(t)\|\le\epsilon,\\
\qquad\|x^\ast-\x^\ast_k(t)\|\le\epsilon,\;\|y-\y_k(t)\|\le\epsilon,\;\|y^\ast-\y^\ast_k(t)\|\le\epsilon\Big\}.
\end{array}
\end{eqnarray*}
It follows from Theorem~\ref{lemma_measurability_reg_sub} that the defined multifunctions $M_k^\epsilon$ have measurable graphs. Furthermore, the sets $M_k^\epsilon(t)$ are nonempty for a.e.\ $t\in T$ and all large $k\in\N$ due to the definitions of the subdifferentials \eqref{rsub}, \eqref{lsub} and of the limiting normal cone \eqref{lnc}. Then the measurable selection theorem taken from Proposition~\ref{Prop_measurableselection} ensures the existences of sequences satisfying the claimed conclusions of the theorem, and thus we complete the proof.
\end{proof}\vspace*{-0.3in}

\section{Sequential Leibniz Rules for Expected-Integral Functionals}\label{Section:FuzzyCalculus}\vspace*{-0.05in}

The final section establishes two main results, which provide sequential versions of the generalized Leibniz rule for expected-integral functionals.\vspace*{0.03in}

Recall \cite{bog} that for a finite measure space $(T,\mathcal{A},\mu)$ there exist measurable disjoint sets $T_{pa}$ and $T_{na}$ such that $\mu_{pa}(\cdot):=\mu(\cdot\cap T_{pa})$ is {\em purely atomic} with countably many disjoint atoms, while $\mu_{na}(\cdot):=\mu(\cdot\cap T_{na})$ is {\em nonatomic}.

Throughout this section we assume that at a given point of interest $(\ox,\ooy)\in\dom\Intf{\varphi}$ there exists $\rho>0$ such that
\begin{align}\label{convex_cond}
\varphi_t(v,\cdot)\;\text{ is convex whenever }\;v\in\mathbb{B}_\rho(\ox)\;\text{ and }\;t\in T_{na}.
\end{align}
Note that the imposed technical assumption \eqref{convex_cond} is used in what follows to get the strong-weak lower semicontinuity of the integral functional under consideration in $L^1\times L^1$. It allows us to obtain more attractive representations of the generalized Leibniz rules obtained below. In some settings this assumption can be dismissed due to the Lyapunov-Aumann convexity theorem (see, e.g., \cite{aum,bog,m06}), while we skip here more detailed discussions on this topic.\vspace*{0.05in}

Before deriving the main results on sequential Leibniz rules, we first establish a relationship between robust local minima and conventional local minima of expected-integral functionals.  The following theorem is certainly of its independent interest, while it is needed below for deriving our main results.

\begin{theorem}[\bf robust vs.\ conventional local minima]\label{Prop_Suf_Con01} Let $p\in[1,\infty)$, and let $(\ox,\ooy)\in\dom\Intf{\varphi}$, where the normal integrand $\varphi\colon\T\times\X\times\Y\to\Rex$ satisfies the lower growth condition \eqref{lower_bound_assump}. Then  we have under the fulfilment of assumption \eqref{convex_cond} that for every $r\in(0,\rho)$
\begin{align}\label{Prop_Suf_Con01:equation}
\wedge_{p,\mathbb{B}_r(\ox)\times\mathbb{B}_r(\ooy)}\Intf{\varphi}=\inf_{\mathbb{B}_r(\ox)\times\mathbb{B}_r(\ooy)}\Intf{\varphi}.
\end{align}
\end{theorem}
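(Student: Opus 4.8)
The goal is to show that, under the convexity assumption \eqref{convex_cond} on $\varphi_t(v,\cdot)$ for $v$ near $\ox$ and $t\in T_{na}$, the $p$-stabilized infimum coincides with the plain infimum over a small ball. One inequality, namely $\wedge_{p,\mathbb{B}_r(\ox)\times\mathbb{B}_r(\ooy)}\Intf{\varphi}\le\inf_{\mathbb{B}_r(\ox)\times\mathbb{B}_r(\ooy)}\Intf{\varphi}$, is essentially automatic: for any feasible $(x,\y)$ one may take $\x\equiv x$ (the constant function), which lies in every $L^p$-ball $\int_T\|\x(t)-x\|^p\dmu\le\varepsilon$, so the inner infimum defining the stabilized infimum is bounded above by $\int_T\varphi_t(x,\y(t))\dmu$, and then taking $\sup_{\varepsilon>0}$ and $\inf_{(x,\y)}$ preserves the inequality. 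So the whole content is the reverse inequality $\wedge_{p}\Intf{\varphi}\ge\inf\Intf{\varphi}$.

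\textbf{Key steps for the reverse inequality.} First I would fix $\varepsilon>0$ and take a near-minimizing triple $(x,\y,\x)$ with $x\in\mathbb{B}_r(\ox)$, $\y\in\mathbb{B}_r(\ooy)$, $\x\in\Leb^p(T,\X)$, and $\int_T\|\x(t)-x\|^p\dmu\le\varepsilon$, realizing (up to a small slack) the inner infimum in the definition of $\wedge_{p}$; the aim is to produce a genuinely feasible pair whose integral functional value is not much larger than $\int_T\varphi_t(\x(t),\y(t))\dmu$. The idea is to replace $\x(t)$ by $x$ in a controlled way. On the atomic part $T_{pa}$: an $L^p$-small perturbation on a countable union of atoms forces $\x(t)=x$ on all but finitely many atoms of small total mass (indeed on an atom $A$, $\int_A\|\x-x\|^p\,d\mu=\mu(A)\|\x_A-x\|^p$), so up to a further error controlled by $\varepsilon$ one may as well assume $\x=x$ there. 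On the nonatomic part $T_{na}$: this is where \eqref{convex_cond} enters. Using convexity of $w\mapsto\varphi_t(x,\cdot)$... more precisely, I would use a Jensen/averaging argument on the second variable combined with the Lyapunov–Aumann convexity theorem to "scalarize" $\x$ to the constant $x$ on $T_{na}$ while keeping the integral value essentially unchanged; alternatively, one approximates by splitting $T_{na}$ into finitely many small pieces on which $\x$ is nearly constant and uses convexity of $\varphi_t(v,\cdot)$ jointly with continuity in $v$ on $\mathbb{B}_\rho(\ox)$. The point is that on $T_{na}$, an $L^p$-small perturbation of the decision variable costs only an $o(1)$ change because one may redistribute mass without changing averages — this is precisely the strong-weak lower semicontinuity alluded to after \eqref{convex_cond}. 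Having replaced $\x$ by $x$ (up to error $\to 0$ as $\varepsilon\downarrow 0$), one gets $\int_T\varphi_t(x,\y(t))\dmu\le\int_T\varphi_t(\x(t),\y(t))\dmu+o(1)\ge\inf_{\mathbb{B}_r(\ox)\times\mathbb{B}_r(\ooy)}\Intf{\varphi}$; letting $\varepsilon\downarrow 0$ yields $\wedge_{p}\Intf{\varphi}\ge\inf\Intf{\varphi}$, completing the proof.

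\textbf{The main obstacle.} The delicate part is the nonatomic estimate: controlling $\int_{T_{na}}\varphi_t(\x(t),\y(t))\dmu$ from below by something close to $\int_{T_{na}}\varphi_t(x,\y(t))\dmu$ using only that $\int_{T_{na}}\|\x(t)-x\|^p\dmu$ is small and that $\varphi_t(v,\cdot)$ is convex (not that $\varphi_t(\cdot,w)$ is convex or even continuous in a quantitative way). One has to be careful that $\x$ is only $L^p$-close to the constant $x$, so pointwise it can be far from $x$ on a set of small measure; handling that "bad set" requires the lower growth bound \eqref{lower_bound_assump} via $\nu\in L^1$ to absorb the contribution where $\varphi_t$ could be very negative, together with an absolute-continuity-of-the-integral argument to make that contribution $o(1)$. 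I expect the convexity in the $w$-variable to be used through the standard trick of "mixing" the perturbed second-stage response with the reference one along $T_{na}$ and invoking Lyapunov's theorem to un-mix it without convexifying, so that the convexity hypothesis \eqref{convex_cond} is exactly what licenses this exchange. Assembling these pieces — the atomic reduction, the nonatomic convexity/Lyapunov argument, and the $L^1$-absolute-continuity control of the bad set — and checking that all error terms vanish as $\varepsilon\downarrow 0$ is the technical heart of the proof.
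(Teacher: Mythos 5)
Your identification of the trivial inequality and the overall strategy (pass to near-minimizers at level $\varepsilon$ and recover a genuinely feasible competitor) is fine, but the core of your plan has a genuine gap, and it stems from a misreading of where the convexity hypothesis \eqref{convex_cond} acts. You propose to keep the near-minimizing $\y$ fixed and replace $\x$ by the constant $x$, estimating $\int_T\varphi_t(x,\y(t))\dmu\le\int_T\varphi_t(\x(t),\y(t))\dmu+o(1)$. There is no tool for this: $\varphi_t(\cdot,w)$ is merely lower semicontinuous in $v$, and lower semicontinuity is a limit statement at a fixed $w$, not a quantitative bound uniform over $w=\y(t)$; since the near-minimizing $\y=\y_\varepsilon$ changes with $\varepsilon$, you would need exactly such uniformity, which is not assumed. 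Moreover, \eqref{convex_cond} is convexity in the \emph{second} variable $w$, so a Jensen/averaging or Lyapunov--Aumann argument in $w$ cannot ``scalarize'' the \emph{first}-variable perturbation $\x$ to the constant $x$ --- these are different slots of the integrand. A telltale sign of the problem is that your argument, if it worked, would never use \eqref{convex_cond} in an essential way, whereas the theorem is false without it.

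The paper's proof resolves this differently: it lets $\varepsilon=1/k\downarrow0$, takes triples $(x_k,\x_k,\y_k)$, extracts $x_k\to\bar u$ by compactness of $\mathbb{B}_r(\ox)$ and $\x_k\to\bar u$ a.e.\ (then uniformly off small sets by Egorov), and --- this is the step your proposal omits entirely --- extracts from $\{\y_k\}$ a subsequence converging \emph{weakly} in $\Leb^1$ off a decreasing sequence of exceptional sets (a biting-lemma argument, \cite[Theorem~4.7.23]{bog}) to some $\y\in\mathbb{B}_r(\ooy)$. The convexity \eqref{convex_cond} is then used precisely through Balder's strong--weak lower semicontinuity theorem on $T_{na}$ (strong convergence in the $v$-slot, weak convergence in the $w$-slot), while on $T_{pa}$ weak $\Leb^1$-convergence gives pointwise convergence on atoms and Fatou's lemma with the minorant $\nu$ from \eqref{lower_bound_assump} suffices. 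This yields $\Intf{\varphi}(\bar u,\y)\le\wedge_{p,\mathbb{B}_r(\ox)\times\mathbb{B}_r(\ooy)}\Intf{\varphi}$ with $(\bar u,\y)$ feasible, which is the reverse inequality. To repair your proof you would have to add the weak-compactness extraction for $\{\y_k\}$ and replace the ``replace $\x$ by $x$ at cost $o(1)$'' step by a genuine strong--weak lower semicontinuity argument; as written, the step would fail.
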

\begin{proof} It is obvious that the inequality ``$\le$" always holds in \eqref{Prop_Suf_Con01:equation}. To verify the opposite one, we denote $B:=\mathbb{B}_r(\ox)$, $C:=\mathbb{B}_r(\ooy)$ and consider the value $I:=\inf_{B\times C}\Intf{\varphi}$, which is assumed to be finite without loss of generality; otherwise the conclusion is trivial. For each $k\in\N$ define the number
\begin{equation*}
\iota_k:=\inf\left\{\int_{\T}\varphi_t\big(\v(t),\w(t)\big)\dmu\;\Bigg|\;\begin{array}{c}
\disp\int_{\T}\|\v(t)-u\|^p\dmu\le 1/k,\;u\in B,\\
\w\in\Leb^1({\T},\X)\;\text{ and }\;\w\in C
\end{array}\right\}
\end{equation*}
and take sequences $\varepsilon_k\dn 0$ and $(x_k,\x_k,\y_k)\in B\times\textnormal{L}^p({T},\X)\times C$ with $\int_{\T}\|\x_k(t)-x_k\|^p\dmu\le 1/k$ such that
\begin{equation}\label{Prop:SuficientCondition_Eq01}
-\varepsilon_k+\int_{{T}}\varphi_t\big(\x_k(t),\y_k(t)\big)\dmu\le\iota_k\le I.
\end{equation}
The compactness of $B$ and standard real analysis allow us to select subsequences (without relabeling) such that $x_k\to\bar u\in B$, $\int_T\|\x_k(t)-\bar u\|^p\dmu\to 0$, and hence
$\x_k(t)\to\bar u$ as $k\to\infty$ for almost all $t\in T$.

We split the rest of the proof into the following three steps.\\[1ex]
{\bf Claim~1:} {\em There exist a subsequence $\{\y_{k_j}\}$, a function $\y\in C$, and a decreasing sequence of sets $\{A_s^1\}_{s\in\N}\subset\mathcal{A}$ with $\mu(A_s^1)\to 0$ as $s\to\infty$ such that for each $s\in\N$ the functions $\y_{k_j}\1_{T\backslash A_s^1}$ converge weakly in $\Leb^1(T,\Y)$ to $\y\1_{T\backslash A_s^1}$ as $j\to\infty$}. Indeed, by \cite[Theorem~4.7.23]{bog} we find a subsequence $\{\y_{k_j}\}$, a function $\y\in\Leb^1(T,\Y)$, and a decreasing sequence of sets $\{A_s^1\}_{s\in\N}\subset\mathcal{A}$ with $\mu(A_s^1)\to 0$ as $s\to\infty$ for which $\y_{k_j}\1_{T\backslash A_m^1}$ converge weakly in $\Leb^1(T,\Y)$ to $\y\1_{T\backslash A_s^1}$ whenever $s\in\N$. To show that $\y\in C$, observe that for each $s\in\N$ we have that the functions $\y_{k_j}\1_{T\backslash A_s^1}+\ooy\1_{A_s^1}$ belong to $C$ and converge weakly in $\Leb^1(T,\Y)$ to $\y\1_{T\backslash A_s^1}+\ooy\1_{A_s^1}$. By the convexity and closedness of $C$ in $\Leb^1(T,\Y)$ we have that $\y\1_{T\backslash A_s^1}+\ooy\1_{A_s^1}\in C$. Thus it follows from the measure convergence $\mu(A_s^1)\to 0$ as $s\to\infty$ that the functions $\y\1_{T\backslash A_s^1}+\ooy\1_{A_s^1}$ converge to $\y$ as $s\to\infty$ in the norm topology of $\Leb^1(T,\Y)$. By the closedness of $C$ we conclude that $\y\in C$ and hence verify all the statements of this claim.\\[1ex]
{\bf Claim~2:} {\em There exits a decreasing sequence of sets $\{A_s^2\}_{s\in\N}\subset\mathcal{A}$ such that $\mu(A^2_s)\to 0$ as $s\to\infty$ and the functions $\x_{k_j}$ converge to $\bar u$ uniformly on $T\backslash A^2_s$ as $j\to\infty$}. To verify this claim, we employ the Egorov theorem (see, e.g., \cite[Theorem~2.2.1]{bog}) and find a decreasing sequence of sets $\{A_s^2\}\subset\mathcal{A}$ with $\mu(A^2_s)\to 0$ as $s\to\infty$ as well as a subsequence $\{\x_{k_j}\}$, which converges to $\bar u$ uniformly on $T\backslash A^2_s$ as $j\to\infty$. This readily justifies Claim~2.\\[1ex]
To proceed further, fix $s\in\N$ and form the set $D_s:=A^1_s\cup A_s^2$ and $D^c_s$ standing for its complement. Then pick $j_s$ such that $\x_{k_j}(t)\in\mathbb{B}_r(\ox)$ for all $t\in T\backslash A^2_s$ and all $j\ge j_s$. For all such $j$ define the functions $\v^s_j:=\x_{k_j}\1_{D_s^c}+\ox\1_{D_s}$, $\v_s:=\bar u\1_{D_s^c}+\ox\1_{D_s}$, $\w^s_j:=\y_{k_j}\1_{D_s^c}+\ooy\1_{D_s}$, and $\w_s:=\y\1_{D_s^c}+\ooy\1_{D_s}$. It follows from the constructions above that the functions $\v^s_j$ converge to $\v_s$ as $j\to\infty$ strongly in $\textnormal{L}^p({T},\X)$ and that the functions $\w^s_j$ converge to $\w_s$ as $j\to\infty$ weakly in $\Leb^1(T,\Y)$. This is used in the proof of the next claim.\\[1ex]
{\bf Claim~3:} {\em  We have that $\Intf{\varphi}(\bar u,\y)\le\wedge_{p,\mathbb{B}_r(\ox)\times\mathbb{B}_r(\ooy)}\Intf{\varphi}$, and consequently the estimates in \eqref{Prop:SuficientCondition_Eq01} are satisfied.} Indeed, it follows from the lower semicontinuity result \cite[Theorem~2.1]{bal} due to the assumptions in \eqref{lower_bound_assump} and \eqref{convex_cond} that on the measure nonatomicity set $T_{na}$ we have the inequalities
\begin{align*}
\int_{T_{na}}\varphi_t\big(\v_s(t),\w_s(t)\big)\dmu\le&\liminf\limits_{j\to\infty}\int_{T_{na}}\varphi_t\big(\v^s_j(t),\w^s_j(t)\big)\dmu\\
=&\liminf\limits_{j\to\infty}\left(\int_{D_s^c\cap{T_{na}}}
\varphi_t\big(\x_{k_j}(t),\y_{k_j}(t)\big)\dmu\right)\\&+\int_{D_s\cap{T_{na}}}\varphi_t\big(\ox,\ooy(t)\big)\dmu\\
\le& \liminf\limits_{j\to\infty} \left(\int_{{T_{na}}}\hspace{-0.3cm}\varphi_t\big(\x_{k_j}(t),\y_{k_j}(t)\big)\dmu  +\hspace{-0.1cm} \int_{D_s\cap{T_{na}}}\hspace{-0.6cm}\nu(t)\dmu\right)\\&+\int_{D_s\cap{T_{na}}}\varphi_t\big(\ox,\ooy(t)\big)\dmu
\\
\le&\liminf\limits_{j\to\infty}\left(\int_{{T_{na}}}\varphi_t\big(\x_{k_j}(t),\y_{k_j}(t)\big)\dmu\right)\\&+\int_{D_s\cap{T_{na}}}\varphi_t\big(\ox,\ooy(t)\big)\dmu+\int_{D_s\cap{T_{na}}}\hspace{-0.2cm}\nu(t)\dmu,
\end{align*}
where $\nu$ is taken from \eqref{lower_bound_assump}. Considering further the atomic measure set $T_{pa}$, we have that $\w_j$ converges pointwise as $j\to\infty$ to $\w$ on $T_{pa}$. Proceeding similarly to the above with the use of Fatou's lemma gives us the estimates
\begin{equation*}
\begin{aligned}
\int_{T_{pa}}\varphi_t\big(\v_s,\w_s(t)\big)\dmu\le&\int_{T_{pa}}\liminf\limits_{j\to\infty}\varphi_t\big(\v^s_j(t),\w^s_j(t)\big)\dmu\\
\le&\liminf\limits_{j\to\infty}\int_{T_{pa}}\varphi_t\big(\v_j^s(t),\w_k^s(t)\big)\dmu\\
\le&\liminf\limits_{j\to\infty}\left(\disp\int_{D_s^c\cap{T_{pa}}}\varphi_t\big(\x_{k_j}(t),\y_{k_j}(t)\big)\dmu\right)\\&+\int_{D_s\cap{T_{pa}}}\varphi_t\big(\ox,\ooy(t)\big)\dmu\\
\le&\liminf\limits_{j\to\infty}\left(\disp\int_{T_{pa}}\varphi_t\big(\x_{k_j}(t),\y_{k_j}(t)\big)\dmu\hspace{-0.1cm}+\hspace{-0.1cm}\int_{D_s\cap T_{pa}}\hspace{-0.7cm}\nu(t)\dmu\right)\\
&+\disp\int_{D_s\cap{T_{pa}}}\varphi_t\big(\ox,\ooy(t)\big)\dmu\\
\le&\liminf\limits_{j\to\infty}\left(\int_{ T_{pa}}\varphi_t\big(\x_{k_j}(t),\y_{k_j}(t)\big)\dmu\right)\\&+\int_{D_s\cap{T_{pa}}}\varphi_t\big(\ox,\ooy(t)\big)\dmu+\int_{D_s\cap T_{pa}}\nu(t)\dmu.
\end{aligned}
\end{equation*}
Unifying now the above estimates on the sets $T_{na}$ and $T_{pa}$, we get
\begin{align*}
\int_{T}\varphi_t\big(\v_s(t),\w_s(t)\big)\dmu\le\wedge_{p,\mathbb{B}_r(\ox)\times\mathbb{B}_r(\ooy)}\Intf{\varphi}+\int_{D_s}\hspace{-0.1cm}\varphi_t\big(\ox,\ooy(t)\big)\dmu\hspace{-0.1cm}+\hspace{-0.1cm} \int_{D_s}\hspace{-0.3cm}\nu(t)\dmu.
\end{align*}
Consequently, it gives us for all $s\in\N$ that
\begin{align*}
\int_{D_s^c}\varphi_t\big(\bar u,\y(t)\big)\dmu\le\wedge_{p,\mathbb{B}_r(\ox)\times\mathbb{B}_r(\ooy)}\Intf{\varphi}+\int_{D_s}\nu(t)\dmu.
\end{align*}
Passing finally to the limit as $s\to\infty$, we arrive at the inequality
\begin{align*}
\inf_{\mathbb{B}_r(\ox)\times\mathbb{B}_r(\ooy)}\Intf{\varphi}\le\int_{T}\varphi_t\big(\bar u,\y(t)\big)\dmu\le\wedge_{p,\mathbb{B}_r(\ox)\times\mathbb{B}_r(\ooy)}\Intf{\varphi}
\end{align*}
and thus complete the proof of the theorem.
\end{proof}

Now we are ready to derive our first sequential Leibniz rule for regular subdifferentiation of expected-integral functionals. Recall that the regular subdifferential of $\Intf{\varphi}$ at a point $(\ox,\oy)\in \dom \Intf{\varphi}$, denoted by $\Hat{\partial}\Intf{\varphi}(\ox,\ooy)$, is given in this setting as the collection of all $(\ox^\ast, \oy^\ast) \in \X \times \Leb^\infty(T,\Y)$ such that
\begin{align*}
\liminf\limits_{(x,\y)\to(\ox,\ooy)}\left( \frac{\Intf{\varphi}(x,\y)-\Intf{\varphi}(\ox,\ooy)-\langle \ox^\ast,x-\ox\rangle-\int_T\langle \oy^\ast(t),\oy-\ooy(t)\rangle\dmu   }{   \| x -\ox\|+\|\y-\ooy\|_1}\right)\geq 0.
\end{align*}

The first result corresponds to the case where the $x$-components of the measurable subgradient selections under the integral sign are taken from the space $\Leb^p({T},\X)$ with $p\in(1,\infty)$.

\begin{theorem}[\bf sequential Leibniz rule for expected-integral functionals, I]\label{theoremsubdiferential} Let $p,q\in(1,\infty)$ with $1/p+1/q=1$, and let $(\ox,\ooy)$ be a point satisfying assumption \eqref{convex_cond}. Given a regular subgradient $(\ox^*,\ooy^\ast)\in\Hat{\partial}\Intf{\varphi}(\ox,\ooy)$ of the expected-integral functional \eqref{eif} such that the function $t\to  \inf\{\varphi_t(\cdot,\cdot)-\la\ooy^\ast(t),\cdot\ra\}$ is integrable on $T$, there exist sequences $\{x_k\}\subset\X$, $\{\x_k\}\subset\Leb^p({T},\X)$, $\{\x_k^*\}\subset{\Leb}^q({T},\X)$, $\{\y_k\}\subset\Leb^1(T,\Y)$, and $\{\y_k^\ast\}\subset\Leb^\infty(T,\Y)$ such that we have the following conditions:
\begin{enumerate}[label=\alph*),ref=\alph*)]
\item[\bf(i)] $\big(\x_k^*(t),\y_k^\ast(t)\big)\in\Hat\partial\varphi_t\big(\x_k(t),\y_k(t)\big)$ for a.e.\ $t\in T$ and all $k\in\N$.
\item[\bf(ii)] $\|\ox-x_k\|\to 0$, $\|\ox-\x_k \|_p \to 0$, $ \|\ooy -\y_k \|_1 \to  0$, $\|\ooy^\ast-\y_k^\ast\|_\infty\to 0$  as $k\to\infty$.
\item[\bf(iii)] $\disp\Big\|\int_T x_k^*(t)\dmu-\ox^\ast\Big\|\to 0$,  $\|\x_k^*\|_q\|\x_k-x_k\|_p\to 0$  as $k\to\infty$.
\item[\bf(iv)] $\disp\int_T|\varphi_t\big(\x_k(t),\y_k(t)\big)-\varphi_t\big(\ox,\ooy(t)\big)|\dmu\to 0$ as $k\to\infty$.
\end{enumerate}
\end{theorem}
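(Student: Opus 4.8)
The plan is to reduce the regular-subgradient statement to the robust-minimizer result of Theorem~\ref{aproximatecalculsrules} by a standard variational device: absorbing the linear term $\langle\ox^\ast,x-\ox\rangle+\int_T\langle\ooy^\ast(t),\oy-\ooy(t)\rangle\dmu$ into the integrand and then ``smoothing out'' the $\liminf\ge 0$ condition with an auxiliary function that penalizes the distance to $(\ox,\ooy)$. Concretely, first I would define the modified normal integrand
\begin{equation*}
\psi_t(x,y):=\varphi_t(x,y)-\langle\ox^\ast,x\rangle-\langle\ooy^\ast(t),y\rangle,
\end{equation*}
so that $\Intf{\psi}(x,\y)=\Intf{\varphi}(x,\y)-\langle\ox^\ast,x\rangle-\int_T\langle\ooy^\ast(t),\y(t)\rangle\dmu$ up to an additive constant, and $(\ox^\ast,\ooy^\ast)\in\Hat\partial\Intf{\varphi}(\ox,\ooy)$ translates into $(0,0)\in\Hat\partial\Intf{\psi}(\ox,\ooy)$. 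The integrability hypothesis on $t\to\inf\{\varphi_t(\cdot,\cdot)-\langle\ooy^\ast(t),\cdot\rangle\}$ is exactly what guarantees that $\psi$ still has a lower growth bound of type \eqref{lower_bound_assump} after a further harmless truncation, so all the machinery of Section~\ref{section3} applies to $\psi$. Note also that since $\ooy^\ast\in\Leb^\infty(T,\Y)$, the convexity assumption \eqref{convex_cond} is preserved by $\psi$ on $\mathbb{B}_\rho(\ox)$ over $T_{na}$ (subtracting a linear term keeps convexity).

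Next I would convert the Fr\'echet-subgradient inequality $(0,0)\in\Hat\partial\Intf{\psi}(\ox,\ooy)$ into a genuine local minimality by the classical ``smooth variational description'' of regular subgradients (see \cite[Theorem~1.88]{m06}): there is a function $s\colon\X\times\Leb^1(T,\Y)\to\R$, Fr\'echet differentiable at $(\ox,\ooy)$ with $\nabla s(\ox,\ooy)=(0,0)$, such that $\Intf{\psi}-s$ attains a local minimum at $(\ox,\ooy)$. Since the perturbation $s$ is smooth with vanishing derivative at the reference point, one can bound it by $o(\|x-\ox\|+\|\y-\ooy\|_1)$ and, after a standard bookkeeping, arrange that $(\ox,\ooy)$ is an honest local minimizer of a perturbed expected-integral functional $\Intf{\tilde\psi}$ whose integrand $\tilde\psi_t$ differs from $\psi_t$ by a term that is small and smooth in its finite-dimensional arguments (absorb the $x$-part of $s$ directly, and for the $L^1$-part use that $\|\y-\ooy\|_1=\int_T\|\y(t)-\ooy(t)\|\dmu$ so the bound integrates pointwise). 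By Theorem~\ref{Prop_Suf_Con01}, a local minimizer of an expected-integral functional satisfying \eqref{lower_bound_assump} and \eqref{convex_cond} is automatically a $p$-robust local minimizer. Then I apply Theorem~\ref{aproximatecalculsrules} to $\tilde\psi$, obtaining sequences $\{x_k\},\{\x_k\},\{\x_k^*\},\{\y_k\},\{\y_k^*\}$ with $(\x_k^*(t),\y_k^*(t))\in\Hat\partial\tilde\psi_t(\x_k(t),\y_k(t))$ a.e., convergence to $(\ox,\ooy)$ in the respective norms, $\|\int_T\x_k^*(t)\dmu\|\to 0$, $\|\y_k^*\|_\infty\to 0$, $\|\x_k^*\|_q\|\x_k-x_k\|_p\to 0$, and the $L^1$-convergence of the integrand values.

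Finally I would translate back. Because $\Hat\partial$ is additive under the addition of a smooth term with controlled derivative, the inclusion $(\x_k^*(t),\y_k^*(t))\in\Hat\partial\tilde\psi_t(\x_k(t),\y_k(t))$ yields $(\x_k^*(t)+\ox^\ast+\text{(small smooth correction)}_k(t),\ \y_k^*(t)+\ooy^\ast(t)+\ldots)\in\Hat\partial\varphi_t(\x_k(t),\y_k(t))$; redefining the $x^\ast$- and $y^\ast$-sequences to incorporate these corrections gives assertion (i) for $\varphi$. The corrections are $o(1)$ uniformly (the $x$-correction is a single vector tending to $0$ and integrates trivially; the $y$-correction is controlled in $L^\infty$), so (ii)–(iv) survive: $\|\int_T x_k^*(t)\dmu-\ox^\ast\|\to 0$ comes from $\|\int_T\x_k^*(t)\dmu\|\to0$ plus vanishing of the correction's integral; $\|\ooy^\ast-\y_k^\ast\|_\infty\to 0$ comes from $\|\y_k^*\|_\infty\to0$ (the value output for $\psi$) plus the $L^\infty$-bound on the correction; the product estimate in (iii) and the $L^1$-convergence in (iv) are inherited verbatim since $\tilde\psi_t$ and $\varphi_t$ differ by a term whose $L^1$-oscillation along the sequence is negligible. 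The main obstacle I anticipate is the bookkeeping in the middle step: ensuring that the smooth variational perturbation $s$ on the non-Asplund space $\X\times\Leb^1(T,\Y)$ can genuinely be \emph{integrated} into a normal integrand (one needs its $\Leb^1$-directional behavior to split as $\int_T$ of a pointwise term, which is where the specific norm $\|\y-\ooy\|_1=\int_T\|\y(t)-\ooy(t)\|\dmu$ is essential) while simultaneously preserving measurability of $\tilde\psi$ as a normal integrand and the lower growth and convexity conditions; this is precisely the ``variational approach'' advertised in the introduction and must be carried out with care.
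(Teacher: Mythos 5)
Your overall skeleton (shift by the subgradient, penalize, upgrade a local minimum to a $p$-robust one via Theorem~\ref{Prop_Suf_Con01}, apply Theorem~\ref{aproximatecalculsrules}, translate back) matches the paper's strategy, but there is a genuine gap precisely at the step you flag as ``the main obstacle,'' and it is not mere bookkeeping. A first, repairable issue: the smooth variational description produces a perturbation $s$ on $\X\times\Leb^1(T,\Y)$ with no integral representation, so it cannot be folded into a normal integrand; the workable substitute is the direct consequence of the definition of $\Hat\partial\Intf{\varphi}(\ox,\ooy)$, namely that for each $\epsilon>0$ the functional $\Intf{\varphi}(x,\y)-\langle\ox^\ast,x-\ox\rangle-\int_T\langle\ooy^\ast(t),\y(t)-\ooy(t)\rangle\dmu+\epsilon\big(\|x-\ox\|+\|\y-\ooy\|_1\big)$ attains a local minimum at $(\ox,\ooy)$, which is what the paper uses and what your $o(\cdot)$ remark amounts to. The decisive issue is the second one: after absorbing $-\langle\ox^\ast,x\rangle$ into the integrand, the lower growth condition \eqref{lower_bound_assump} fails, because $\varphi_t(v,w)-\langle\ox^\ast,v\rangle-\langle\ooy^\ast(t),w\rangle$ is unbounded below in $v$ (your integrability hypothesis only controls the infimum with the linear term in $w$). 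That condition is genuinely needed, since in Lemma~\ref{Robustlocalminimizer} and Theorem~\ref{aproximatecalculsrules} the relaxed variable $\v$ ranges over all of $\Leb^p(T,\X)$, not over a neighborhood of $\ox$. Your ``harmless truncation'' therefore has to be an indicator $\delta_{\mathbb{B}_r(\ox)}(v)$ sitting inside the integrand for every $t\in T$, and this is exactly the move that Remark~\ref{point}(ii) identifies as fatal: the sequence $\x_k$ produced by Theorem~\ref{aproximatecalculsrules} converges to $\ox$ only in $\Leb^p$, which does not force $\x_k(t)\in\inter\mathbb{B}_r(\ox)$ for a.e.\ $t$, so on a set of positive measure the pointwise inclusions carry normal cones to the ball that you cannot strip off to recover $\Hat\partial\varphi_t$.

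The idea your proposal is missing is the paper's one-point extension of the measure space: an atom $\tilde t$ of mass one is adjoined to $T$, and the entire deterministic part of the perturbation, $-\langle\ox^\ast,v-\ox\rangle+\epsilon\|v-\ox\|+\delta_{\ox+\mathbb{B}}(v)$, is assigned to the integrand at $t=\tilde t$ only, as in \eqref{phi}, while on $T$ the integrand is modified solely in the $w$-variable, where the hypothesis does yield an integrable lower bound. Because $\tilde t$ is an atom of mass one, the estimate $\|\ox-\v\|_p\le\epsilon$ gives the pointwise bound $\|\ox-\v(\tilde t)\|\le\epsilon<1$, so the indicator is inactive at $\tilde t$ and the subgradient at the atom returns $\ox^\ast$ up to $\epsilon$. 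Without this device (or a substitute such as the separate treatment of the exceptional set via Lemma~\ref{lemmasubdeltaminimun} carried out in the proof of Theorem~\ref{theorem_main_fuzzy_sub}, which needs extra hypotheses), the argument does not close. A smaller point: passing from $\Hat\partial\phi_t$ back to $\Hat\partial\varphi_t$ requires the fuzzy, not exact, sum rule for regular subgradients combined with a measurable-selection argument through Theorem~\ref{lemma_measurability_reg_sub}, since the penalty terms $\epsilon\|\cdot\|$ are nonsmooth at the reference points.
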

\begin{proof}  Without loss of generality we can assume that $\mu(T)=1$. Fix $\Tilde t\notin T$ and define the new measure space $(\Tilde{T},\tilde{\mathcal{A}},\tilde{\mathcal{\mu}})$ by $\tilde\T:=\T\cup\{\tilde t\}$, $\tilde{\mathcal{A}}$ as the smallest  $\sigma$-algebra containing  $\tilde{\mathcal{A}}$ and $\{\tilde t\}$, and $\tilde\mu\colon\tilde{\mathcal{A}}\to\R$ by
\begin{align*}
\tilde{\mu}(A):=\mu(A\backslash\{\tilde t\})+\1_{A}(\tilde t).
\end{align*}
Then extend $\ooy$ to $\tilde{T}$ by $\ooy(\tilde{t}):=0$ therein and consider the normal integrand
\begin{align}\label{phi}
\phi(t,v,w )&:=\left\{\begin{array}{cc}
\varphi_t(v,w)-\langle\ooy^\ast(t),w-\ooy(t)\rangle+\epsilon\|w-\ooy(t)\|&\text{ if }\;t\in T,\\
-\langle\ox^\ast,v-\ox\rangle+\epsilon\|v-\ox\|+\delta_{\ox+\mathbb{B}}(v)&\text{ if }\;t=\tilde{t}.
\end{array}\right.
\end{align}
Picking $\epsilon\in(0,1)$ and using $(\ox^*,\ooy^\ast)\in\Hat{\partial}\Intf{\varphi}(\ox,\ooy)$ together with condition \eqref{convex_cond}, we find $\eta\in(0,\rho)$ such that
\begin{align*}
\Intf{\phi}^{\tilde{\mu}}(u,\w)\ge\Intf{\phi}^{\tilde{\mu}}(\ox,\ooy)=\Intf{\varphi}(\ox,\ooy)\;\mbox{ for all }\;u\in\mathbb{B}_\eta( \ox),\;\w\in\mathbb{B}_{\eta}(\ooy)\subset\Leb^1(\tilde{T},\Y).
\end{align*}
Since $\phi$ satisfies the assumptions in \eqref{lower_bound_assump} and \eqref{convex_cond} at $(\ox,\ooy)$, it follows from Theorem~\ref{Prop_Suf_Con01} that
$\Intf{\phi}^{\tilde{\mu}}$ attains a $p$-robust minimum at $(\ox,\ooy)$ on $\mathbb{B}_\eta(\ox)\times\mathbb{B}_{\eta}(\ooy)$. Thus Theorem~\ref{aproximatecalculsrules} gives us $u\in\X$, $\v\in\Leb^p({\tilde T},\X)$, $\v^*\in\Leb^q({\tilde T},\X)$, $\w\in\Leb^1({\tilde T},\Y)$, and $\w^*\in\Leb^\infty({\tilde T},\Y)$ satisfying the conditions
\begin{enumerate}[leftmargin=0.4in,nolistsep,label=\roman*),ref=\roman*)]
\item[\bf(a)] $\big(\v^*(t),\w^\ast(t)\big)\in\Hat{\partial}\phi_t\big(\v(t),\w(t)\big)$ for a.e.\ $t\in\Tilde T$.
\item[\bf(b)] $\|u-\ox\|\le\epsilon$, $\|\ox-\v\|_p\le\epsilon$, $\|\ooy-\w\|_1\le\epsilon$.
\item[\bf(c)] $\left\|\disp\int_{\tilde T}\v^*(t)\tilde\mu(dt)\right\|\le\epsilon$, $\|\w^\ast\|_\infty\le\epsilon$.
\item[\bf(d)] $\|\v^*\|_q\|\v-u\|_p\le\epsilon$, $\disp\int_{\tilde T}\Big|\phi_t\big(\v(t),\w(t)\big)-\phi_t\big(\ox,\ooy(t)\big)\Big|\, \tilde\mu(dt)\le\epsilon$.
\end{enumerate}

It follows from (a), (b), and the limiting subdifferential sum rule applied to the function $\phi$ due to its structure that $\v^\ast(\tilde{t})\in\mathbb{B}_{\epsilon}(\ox^\ast)$. Take now $\gamma\in(0,\epsilon)$ satisfying $\gamma\|\v^*\|_q\le\epsilon$  and then define the multifunction $M\colon T\tto\mathbb{R}^{2(n+m)}$ by $(x,x^\ast,y,y^\ast)\in M(t)$ if and only if
\begin{equation*}
\begin{array}{ll}
(x^\ast,y^\ast)\in\Hat\partial\varphi_t(x,y),\;\big|\varphi_t(x,y)-\varphi_t\big(\v(t),\w(t)\big)\big|\le\gamma,\\
\|x-\v(t)\|\le\gamma,\;\|x^\ast-\v^\ast(t)\|\le\gamma,\\
\|y-\w(t)\|\le\gamma,\;\|y^\ast-\ooy^\ast(t)\|\le2\epsilon+\gamma.\\
\end{array}
\end{equation*}
Theorem~\ref{lemma_measurability_reg_sub} implies that the multifunction $M(\cdot)$ is graph measurable on $T$. Using further the fuzzy sum rule for regular subgradients of the function $\phi$ (see, e.g., \cite[Theorem~2.33(b)]{m06}) and its summation structure in \eqref{phi}, we conclude that the sets $M(t)$ are nonempty for a.e.\ $t\in T$. Then the measurable selection result from Proposition~\ref{Prop_measurableselection} ensures the existence of a measurable quadruple $(\x(t),\x^\ast(t),\y(t),\y^\ast(t))$ belonging to $M(t)$ for a.e.\ $t\in T$. It tells us  that this measurable selection satisfies the relationships
\begin{equation*}
\begin{array}{ll}
\big(\x^*(t),\y^\ast(t)\big)\in\Hat\partial\varphi_t\big(\x(t),\y(t)\big)\;\mbox{ for a.e. }\;t\in T,\;\;\|\ox-\x\|_p\le 2\epsilon,\\
\|\ooy-\y\|_1\le 2\epsilon,\;\Big\|\disp\int_T\x^*(t)\dmu-x^\ast\Big\|\le 3\epsilon,\;\|\y^\ast-\ooy^\ast\|_\infty\le 3\epsilon,\\
\disp\int_T\big|\varphi_t\big(\x(t),\y(t)\big)-\varphi_t\big(\ox,\ooy(t)\big)\big|\dmu\le (\|\ooy\|_\infty +3)\epsilon.
\end{array}
\end{equation*}
Finally, we estimate
\begin{align*}
	\|\x^*\|_q\|\x-u\|_p& \le\left(  \|\x^\ast - \v^\ast\|_q +   \| \v^\ast\|_q  \right)  \left( \|\x-\v\|_p + \| \v - u\|_p\right)\\
	&\le \left(  \gamma  +   \| \v\|_q  \right) \left(  \gamma   + \| \v - u\|_p\right)\\
&	\leq \epsilon^2  +\epsilon + \gamma \| \v - u\|_p + \| \v\|_q \| \v - u\|_p\\
&\leq \epsilon^2  +\epsilon + 2\epsilon^2 +\epsilon \leq 5\epsilon,
\end{align*}
which readily completes the proof of the theorem.
\end{proof}\vspace*{-0.2in}

\begin{remark}\label{point} The following explanations of the results and proof of Theorem~\ref{theoremsubdiferential} seem to be useful for the better understanding.

(i) Let us first emphasize that the {\em sequential} form of the generalized Leibniz rule of Theorem~\ref{theoremsubdiferential} is essential for the fulfillment of the obtained results and cannot be replaced by more appealing pointwise versions. This is due to the nonrobust nature of regular subgradients used here and the lack of basic calculus rules for them in finite and infinite dimensions; see \cite{m06,m18,rw} for more discussions. A simple counterexample for the failure of the pointwise counterpart of Theorem~\ref{theoremsubdiferential} is provided by the function $\ph_t(x,y):=-|x|$ defined on a probability measure space $(T,\mathcal{A},{\mathbb P})$, where we have $\Intf{\varphi}(x,\y)=-|x|$. Note that the pointwise Leibniz rules can be obtained in terms of the robust limiting subgradients \eqref{lsub}, which will be done in our future research by using the sequential results obtained here with furnishing appropriate limiting procedures.

(ii) Regarding the proof of Theorem~\ref{theoremsubdiferential}, observe that the employed one-point-extended measure technique allows us to isolate the subgradient deterministic part in an atom of the measure as in \eqref{phi}. Using this fact, we benefit from the general structure of Theorem~\ref{Prop_Suf_Con01} and get better estimates of the subgradients of the function $\phi$. In contrast, if we simply modify the integrand (without a modification of the measure space) as
$$
\phi(t,v,w )=\varphi_t(v,w)-\langle\ox^\ast,v-\ox\rangle-\langle\ooy^\ast(t),w\rangle+ \epsilon\|v-\ox\|+\epsilon\|w-\ooy(t)\|+\delta_{\ox+\mathbb{B}}(v),
$$
then the possible estimates will be with respect to measurable sequences of subgradients $(\x_k^\ast(t),\y_k^\ast(t))\in\Hat\partial\ph(t,\x_k(t),\y_k(t))$. Since the convergence of $\x_k$ is in the norm topology of $L^p$, this does not imply that $\x_k(t)$ belongs to the interior of $\ox+\mathbb{B}$ for a.e. $t\in T$, and thus we cannot employ calculus rules omitting the indicator function of this term.
\end{remark}\vspace*{-0.05in}

The second version of the sequential Leibniz rule derived below for expected-integral functionals \eqref{eif} concerns an important setting where the basic space is $\Leb^\infty(T,\X)$. This space is very useful in applications to stochastic and economic modeling, but fails to have some properties that are largely employed in variational analysis. In particular, it is not separable and not Asplund (i.e., not each of its subspace has a separable dual in contrast, e.g., to the case of reflexive Banach spaces). It has been well recognized that neither pointwise calculus holds for limiting subgradients, nor fuzzy calculus is available for regular subgradients in non-Asplund spaces; see, e.g., \cite{m06} and the references therein. Nevertheless, in what follows we establish a sequential Leibniz rule in this framework that is a major calculus result. The obtained result and its proof essentially exploit specific features of expected-integral functionals.\vspace*{0.05in}

To proceed, we first derive the following lemma.

\begin{lemma}[\bf measurable selections of regular subgradient mappings]\label{lemmasubdeltaminimun} Let $\v(\cdot),\w(\cdot)$ be two measurable functions with values in $\X$ and $\Y$, respectively,  and let $\varepsilon(\cdot)$, $\lambda_1(\cdot)$, and $\lambda_2(\cdot)$ be three strictly positive measurable functions on $T$. Suppose that the pair $(\v(t),\w(t))$ is an $\varepsilon(t)$-minimizer of the normal integrand $\varphi_t\colon\R^n\times\R^m\to\oR$ for a.e.\ $t\in T$. Then there exist measurable functions $(\x,\x^\ast,\y,\y^\ast)$ such that for a.e.\ $t\in T$ we have the conditions
\begin{equation*}
\begin{array}{ll}
\big(\x^\ast(t),\y^*(t)\big)\in\Hat{\partial}\varphi_t\big(\x(t),\y(t)\big),\;\|\x(t)-\v(t)\|\le\lambda_1(t),\;\|\y(t)-\w(t)\|\le\lambda_2(t),\\
\disp\|\x^\ast(t)\|\le 2\frac{\epsilon(t)}{\lambda_1(t)},\;\|\y^\ast(t)\|\le 2\frac{\epsilon(t)}{\lambda_2(t)},\;\big|\varphi_t\big(\v(t),\w(t)\big)-\varphi_t\big(\x(t),\y(t)\big)\big|\le\epsilon(t).
\end{array}
\end{equation*}
\end{lemma}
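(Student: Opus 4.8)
The plan is to reduce the assertion to a \emph{fiberwise} construction followed by a measurable selection. Fix $t\in T$ such that $(\v(t),\w(t))$ is an $\varepsilon(t)$-minimizer of $\varphi_t$ (which holds for a.e.\ $t$), so that in particular $\varphi_t$ is proper with $\inf_{\R^n\times\R^m}\varphi_t\in\R$. I would introduce the quadratically perturbed function
\begin{equation*}
g_t(x,y):=\varphi_t(x,y)+\frac{\varepsilon(t)}{\lambda_1(t)^2}\,\|x-\v(t)\|^2+\frac{\varepsilon(t)}{\lambda_2(t)^2}\,\|y-\w(t)\|^2,
\end{equation*}
which is proper, l.s.c., and coercive on $\R^n\times\R^m$, hence attains its minimum at some pair $(\x(t),\y(t))$. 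Comparing $g_t(\x(t),\y(t))\le g_t(\v(t),\w(t))=\varphi_t(\v(t),\w(t))\le\inf\varphi_t+\varepsilon(t)$ with the bound $\varphi_t(\x(t),\y(t))\ge\inf\varphi_t$ immediately yields the distance estimates $\|\x(t)-\v(t)\|\le\lambda_1(t)$, $\|\y(t)-\w(t)\|\le\lambda_2(t)$, the inequality $\varphi_t(\x(t),\y(t))\le\varphi_t(\v(t),\w(t))$, and thus the value estimate $|\varphi_t(\v(t),\w(t))-\varphi_t(\x(t),\y(t))|\le\varepsilon(t)$. Since $(\x(t),\y(t))$ minimizes $g_t$, the subdifferential Fermat rule \cite[Proposition~1.114]{m06} gives $0\in\Hat\partial g_t(\x(t),\y(t))$, and the exact sum rule for regular subgradients involving the ${\cal C}^1$ perturbation (see, e.g., \cite{m06,rw}) produces
\begin{equation*}
\big(\x^\ast(t),\y^\ast(t)\big):=-\Big(\frac{2\varepsilon(t)}{\lambda_1(t)^2}\big(\x(t)-\v(t)\big),\,\frac{2\varepsilon(t)}{\lambda_2(t)^2}\big(\y(t)-\w(t)\big)\Big)\in\Hat\partial\varphi_t\big(\x(t),\y(t)\big),
\end{equation*}
whence $\|\x^\ast(t)\|\le 2\varepsilon(t)/\lambda_1(t)$ and $\|\y^\ast(t)\|\le 2\varepsilon(t)/\lambda_2(t)$. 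This shows that for a.e.\ $t$ the set $M(t)\subset\R^{2(n+m)}$ of all quadruples $(x,x^\ast,y,y^\ast)$ satisfying the six conclusions of the lemma is nonempty.

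It then remains to select such a quadruple measurably in $t$, and the crux is to verify that $\gph M\in\mathcal{A}\otimes\mathcal{B}(\R^{2(n+m)})$. Here the inclusion $(x^\ast,y^\ast)\in\Hat\partial\varphi_t(x,y)$ carves out precisely the graph-measurable set supplied by Theorem~\ref{lemma_measurability_reg_sub} applied to the normal integrand $\varphi$ on $T\times\R^{n+m}$; the two distance constraints and the two dual-norm constraints carve out $\mathcal{A}\otimes\mathcal{B}$-measurable sets because the functions $(t,x)\mapsto\|x-\v(t)\|-\lambda_1(t)$ and the analogous ones are Carath\'eodory (measurable in $t$, continuous in the Euclidean variable); and the value constraint $|\varphi_t(\v(t),\w(t))-\varphi_t(x,y)|\le\varepsilon(t)$ is $\mathcal{A}\otimes\mathcal{B}$-measurable since $\varphi$ is $\mathcal{A}\otimes\mathcal{B}(\R^{n+m})$-measurable as a normal integrand and $t\mapsto\varphi_t(\v(t),\w(t))$ is measurable and a.e.\ finite. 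A finite intersection of such sets is again in $\mathcal{A}\otimes\mathcal{B}(\R^{2(n+m)})$, so $M$ is graph measurable with a.e.\ nonempty values, and Proposition~\ref{Prop_measurableselection} delivers measurable functions $(\x,\x^\ast,\y,\y^\ast)$ with $(\x(t),\x^\ast(t),\y(t),\y^\ast(t))\in M(t)$ for a.e.\ $t\in T$, as required.

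The main obstacle is exactly this graph-measurability of the selection set $M$: since the regular subgradient mapping $\Hat\partial\varphi_t$ typically fails to have closed graph, the classical measurable-multifunction calculus does not apply directly, and it is precisely Theorem~\ref{lemma_measurability_reg_sub} (together with Proposition~\ref{Prop_measurableselection}, which handles graph-measurable but possibly non-closed-valued multifunctions) that rescues the argument. The fiberwise step, by contrast, is routine once the perturbation is taken with the weights $\varepsilon(t)/\lambda_i(t)^2$, which are tuned so that the distance bounds come out as $\lambda_i(t)$ while the gradient of the perturbation, evaluated at the minimizer, has norm at most $2\varepsilon(t)/\lambda_i(t)$.
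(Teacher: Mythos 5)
Your proposal is correct and follows essentially the same route as the paper: the identical quadratic perturbation with weights $\varepsilon(t)/\lambda_i(t)^2$, a fiberwise minimization to show the selection set $M(t)$ is nonempty, and then graph measurability via Theorem~\ref{lemma_measurability_reg_sub} combined with Proposition~\ref{Prop_measurableselection}. The only (harmless) deviations are that you invoke the exact sum rule for the ${\cal C}^1$ perturbation to get an explicit subgradient where the paper uses the fuzzy sum rule, and you derive the lower boundedness of $\varphi_t$ directly from the $\varepsilon(t)$-minimizer hypothesis rather than from the standing growth condition \eqref{lower_bound_assump}.
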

\begin{proof} Define the graph measurable multifunction $M\colon T\tto\mathbb{R}^{2(n+m)}$ as follows: $(x,x^\ast,y,y^\ast)\in M(t)$ if and only if
\begin{equation}
\begin{aligned}
\begin{array}{ll}
(x^\ast,y^*)\in\Hat{\partial}\varphi_t(x,y),\\
\|x-\v(t)\|\le\lambda_1(t),\;\|y-\w(t)\|\le\lambda_2(t),\\
\disp\|x^\ast\|\le 2\frac{\epsilon(t)}{\lambda_1(t)},\;\|\y^\ast\|\le 2\frac{\epsilon(t)}{\lambda_2(t)},\\
\big|\varphi_t\big(\v(t),\w(t)\big)-\varphi_t(x,y)\big|\le\epsilon(t).
\end{array}
\end{aligned}
\end{equation}
To show that $M(t)\ne\emp$ for a.e.\ $t\in T$, fix $t\in T$ and consider the function
\begin{align*}
\psi_t(v,w):=\varphi_t(v,w)+\frac{\epsilon(t)}{\lambda^2_1(t)}\|v-\v(t)\|^2+\frac{\epsilon(t)}{\lambda^2_2(t)}\|w-\w(t)\|^2.
\end{align*}
It follows from the lower semicontinuity of $\ph_t(\cdot,\cdot)$ and the lower growth condition \eqref{lower_bound_assump} that the function $\psi_t$  attains its local minimum at some $(x_t,y_t)\in\X\times\Y$. Then it is easy to deduce from the structure of $\psi_t$ that
\begin{equation*}
\|x_t-\v(t)\|\le\lambda_1(t),\;\|y_t-\w(t)\|\le\lambda_2(t),\;\big|\varphi_t(x_t,y_t)-\varphi_t\big(\v(t),\w(t)\big)\big|\le\epsilon(t).
\end{equation*}
Furthermore, by the Fermat rule and the aforementioned fuzzy sum rule for the regular subdifferential in finite dimensions, we deduce the existence of $(x_t^\ast,y^\ast_t)\in\Hat{\partial} \varphi_t(x_t,y_t)$ such that $\|x^\ast_t\|\le 2\epsilon(t)/\lambda_1(t)$ and $\|y_t^\ast\|\le 2\epsilon(t)/\lambda_2(t)$, which implies that $M(t)$ is nonempty for a.e.\ $t\in T$. Finally, using the measurable selection result from Proposition~\ref{Prop_measurableselection} completes the proof.
\end{proof}

Now we are ready to establish the following sequential Leibniz rule with corresponding measurable selections in $\Leb^\infty(T,\X)$ and $\Leb^1(T,\X)$.

\begin{theorem}[\bf sequential Leibniz rule for expected-integral functionals, II]\label{theorem_main_fuzzy_sub} Let $(\ox,\ooy)$ be a point satisfying assumption \eqref{convex_cond} and consider $(\ox^*,\ooy^\ast)\in\Hat{\partial}\Intf{\varphi}(\ox,\ooy)$ be a regular subgradient of the expected-integral functional \eqref{eif}. Suppose that there exist $\hat{\rho}>0$ and  an integrable function $\Hat{\nu}\colon T\to(0,\infty)$ with
\begin{align}\label{growth_2}
\varphi_t(u,w)-\langle\ooy^\ast(t),w-\ooy(t)\rangle\ge-\hat{\nu}(t)\;\mbox{ as }\;u\in\mathbb{B}_{\hat{\rho}}(\ox),\;w\in\Y,\;t\in T.
\end{align}
Then there exist sequences $\{x_k\}\subset\X$, $\{\x_k\}\subset\Leb^\infty({T},\X)$, $\{\x_k^\ast\}\subset{\Leb}^1({T},\X)$, $\{\y_k\}\subset\Leb^1(T,\Y)$, and $\{\y_k^\ast\}\subset\Leb^\infty(T,\Y)$ such that
\begin{enumerate}[label=\alph*),ref=\alph*)]
\item[\bf(i)] $\big(\x_k^*(t),\y_k^\ast(t)\big)\in\Hat\partial\varphi_t\big(\x_k(t),\y_k(t)\big)$ for a.e.\ $t\in T$ and all $k\in\N$.
\item[\bf(ii)] $\|\ox-x_k\|\to 0$, $\|\ox-\x_k\|_\infty\to 0$,  $\|\ooy-\y_k\|_1\to 0$, $\|\oy^\ast - \oy_k\| \to 0$  as $k\to\infty$.
\item[\bf(iii)] $\disp\Big\|\int_T\x_k^*(t)\dmu-\ox^\ast\Big\|\to 0$,   $\disp\int_T\|\x_k^*(t)\|\cdot\|\x_k(t)-x_k\|\dmu\to 0$   as $k\to\infty$.
\item[\bf(iv)] $\disp\int_T\big|\varphi_t\big(\x_k(t),\y_k(t)\big)-\varphi_t\big(\ox,\ooy(t)\big)\big|\dmu\to 0$ as $k\to\infty.$
\end{enumerate}
\end{theorem}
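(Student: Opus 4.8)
\medskip\noindent\textit{Proof plan.}
First I would follow the pattern of the proof of Theorem~\ref{theoremsubdiferential}, the only essential change being that the appeal there to Theorem~\ref{aproximatecalculsrules} (which produces $\Leb^p$-selections for finite $p$ only) must be replaced by a direct construction based on Lemma~\ref{lemmasubdeltaminimun}. As a preliminary reduction, normalize $\mu(T)=1$ and pass to the normal integrand
\[
\psi_t(v,w):=\varphi_t(v,w)-\langle\ooy^\ast(t),w-\ooy(t)\rangle-\langle\ox^\ast,v-\ox\rangle .
\]
By \eqref{growth_2} this function is bounded below by an integrable function on $\mathbb{B}_{\hat\rho}(\ox)\times\Y$; it inherits convexity in $w$ on $T_{na}$ from \eqref{convex_cond} (the subtracted term being affine in $w$); and since $\Intf{\psi}$ differs from $\Intf{\varphi}$ by a continuous affine functional on $\X\times\Leb^1(T,\Y)$, the hypothesis $(\ox^\ast,\ooy^\ast)\in\Hat\partial\Intf{\varphi}(\ox,\ooy)$ is equivalent to $(0,0)\in\Hat\partial\Intf{\psi}(\ox,\ooy)$. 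Since adding back the affine term $(\ox^\ast,\ooy^\ast(t))$ to a regular subgradient of $\psi_t$ gives one of $\varphi_t$ — shifting $\int_T\x_k^\ast\dmu$ by $\ox^\ast$ (recall $\mu(T)=1$) and $\y_k^\ast(t)$ by $\ooy^\ast(t)$ — it suffices to prove the theorem assuming $\ox^\ast=0$, $\ooy^\ast=0$; I write again $\varphi$ for $\psi$. Fixing $\epsilon\in(0,1)$, the condition $(0,0)\in\Hat\partial\Intf{\varphi}(\ox,\ooy)$ gives $\eta\in(0,\min\{\rho,\hat\rho\})$ for which $(\ox,\ooy)$ minimizes $(x,\y)\mapsto\Intf{\varphi}(x,\y)+\epsilon\|x-\ox\|+\epsilon\|\y-\ooy\|_1$ on $\mathbb{B}_\eta(\ox)\times\mathbb{B}_\eta(\ooy)$, with value $\Intf{\varphi}(\ox,\ooy)$.

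The core of the argument — and the step I expect to be the main obstacle — is an $L^\infty$-counterpart of the $p$-robustness supplied by Theorem~\ref{Prop_Suf_Con01}. For $\tau>0$ put
\begin{align*}
I_\tau:=\inf\Big\{&\int_T\varphi_t\big(\x(t),\y(t)\big)\dmu+\epsilon\|\y-\ooy\|_1+\|\ox-u\|^2\ \Big|\\
&u\in\mathbb{B}_\eta(\ox),\ \x\in\Leb^\infty(T,\X),\ \y\in\Leb^1(T,\Y),\ \|\x-u\|_\infty\le\tau,\ \|\y-\ooy\|_1\le\eta\Big\}.
\end{align*}
Testing with $(u,\x,\y)=(\ox,\ox,\ooy)$ gives $I_\tau\le\Intf{\varphi}(\ox,\ooy)$, and $I_\tau$ is nondecreasing as $\tau\downarrow0$; the claim to establish is $\liminf_{\tau\downarrow0}I_\tau\ge\Intf{\varphi}(\ox,\ooy)-C\epsilon$ for an absolute constant $C$. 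The delicate point is that $\varphi_t$ need not be convex in $v$, so a priori letting the first component vary could lower the infimum; this is exactly what the constraint $\|\x-u\|_\infty\le\tau$ prevents. Indeed, along a minimizing sequence $(u_j,\x_j,\y_j)$ with $\tau=\tau_j\downarrow0$ one extracts, by compactness of $\mathbb{B}_\eta(\ox)$, a subsequence $u_j\to\bar u$, whence $\x_j\to\bar u$ \emph{uniformly} on $T$, so that eventually $\x_j(t)\in\mathbb{B}_\rho(\ox)$; meanwhile $\y_j$ is treated exactly as in the proof of Theorem~\ref{Prop_Suf_Con01} — a biting subsequence (see \cite[Theorem~4.7.23]{bog}) gives weak $\Leb^1$-convergence $\y_j\to\bar\y$ off sets of arbitrarily small measure, with $\bar\y\in\mathbb{B}_\eta(\ooy)$ by convexity and closedness — and Balder's lower semicontinuity theorem \cite[Theorem~2.1]{bal} on $T_{na}$ (using \eqref{convex_cond} and the lower bound) together with Fatou's lemma on $T_{pa}$ yields $\int_T\varphi_t(\bar u,\bar\y(t))\dmu+\epsilon\|\bar\y-\ooy\|_1+\|\ox-\bar u\|^2\le\liminf_{j}I_{\tau_j}$. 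Invoking the minimality from the first paragraph at the constant $\bar u$ and the function $\bar\y$ then produces the asserted lower bound.

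Granting this, fix $j$ so large that $I_{\tau_j}\ge\Intf{\varphi}(\ox,\ooy)-C\epsilon$ and fix a near-optimal $u_j$, so $\|\ox-u_j\|^2\le C\epsilon+o_j(1)$. Proposition~\ref{dualityRock}, applied on $\X\times\Y$ (the near-optimal $(\x_j,\y_j)$ supplying the finiteness proviso), moves the infimum over $(\x,\y)$ — with $u=u_j$ frozen and the ball constraints written as indicators — inside the integral, producing measurable $\v_j,\w_j$ whose values $(\v_j(t),\w_j(t))$ are $\epsilon_j(t)$-minimizers over all of $\X\times\Y$ of $g^j_t(v,w):=\varphi_t(v,w)+\epsilon\|w-\ooy(t)\|+\delta_{\mathbb{B}_{\tau_j}(u_j)}(v)+\delta_{\mathbb{B}_\eta(\ooy(t))}(w)$, with $\int_T\epsilon_j(t)\dmu\to0$ and $\v_j(t)\in\mathbb{B}_{\tau_j}(u_j)$. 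Now Lemma~\ref{lemmasubdeltaminimun}, applied to the integrably-bounded-below integrand $g^j$ with $\lambda_1\equiv\tau_j$ and a constant $\lambda_2=\lambda_j$, returns measurable $(\x_j,\x_j^\ast,\y_j,\y_j^\ast)$ with $(\x_j^\ast(t),\y_j^\ast(t))\in\Hat\partial g^j_t(\x_j(t),\y_j(t))$, $\|\x_j(t)-\v_j(t)\|\le\tau_j$, $\|\y_j(t)-\w_j(t)\|\le\lambda_j$, $\|\x_j^\ast(t)\|\le 2\epsilon_j(t)/\tau_j$, $\|\y_j^\ast(t)\|\le 2\epsilon_j(t)/\lambda_j$, and $|g^j_t(\v_j(t),\w_j(t))-g^j_t(\x_j(t),\y_j(t))|\le\epsilon_j(t)$. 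Passing from $\Hat\partial g^j_t$ to $\Hat\partial\varphi_t$ by the fuzzy sum rule for regular subgradients in finite dimensions (the Lipschitz summand $\epsilon\|\cdot-\ooy(t)\|$ being absorbed in the $\y^\ast$-estimate, the ball indicators being harmless after a slight enlargement of the balls placing the relevant points in their interiors), and keeping all selections measurable via the graph measurability of $t\mapsto\gph\Hat\partial\varphi_t$ from Theorem~\ref{lemma_measurability_reg_sub} and Proposition~\ref{Prop_measurableselection}, I set $x_j:=u_j$ and obtain sequences for which: (i) holds by construction; (ii) holds because $\|\x_j-u_j\|_\infty\le\tau_j\to0$, $\|\ox-u_j\|\to0$, $\|\ooy-\y_j\|_1\to0$ (biting convergence of $\w_j$ together with $\|\y_j-\w_j\|_\infty\le\lambda_j$), and $\|\y_j^\ast\|_\infty\le 2\esssup_t\epsilon_j(t)/\lambda_j$; (iii) holds because $\|\x_j^\ast(t)\|\,\|\x_j(t)-x_j\|\le 2\epsilon_j(t)$ a.e.\ and $\big\|\int_T\x_j^\ast(t)\dmu\big\|\le\int_T 2\epsilon_j(t)/\tau_j\dmu$, both tending to $0$ once $\tau_j$ and $\lambda_j$ are chosen to decay slowly enough relative to $\int_T\epsilon_j\dmu$ and $\esssup_t\epsilon_j$; and (iv) holds by Proposition~\ref{lemma:convergenciaL1}, applicable since $\int_T\varphi_t(\x_j(t),\y_j(t))\dmu\to\Intf{\varphi}(\ox,\ooy)$. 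Undoing the normalization of $\ox^\ast,\ooy^\ast$ and running a final diagonalization in $\epsilon\downarrow0$ yields the sequences claimed in the theorem.
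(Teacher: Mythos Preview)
Your plan diverges from the paper's argument and has a genuine gap at the step where you apply Lemma~\ref{lemmasubdeltaminimun} to the constrained integrand
\[
g^j_t(v,w)=\varphi_t(v,w)+\epsilon\|w-\ooy(t)\|+\delta_{\mathbb{B}_{\tau_j}(u_j)}(v)+\delta_{\mathbb{B}_\eta(\ooy(t))}(w)
\]
and then transfer the conclusion to $\varphi_t$. The lemma returns $(\x_j^\ast(t),\y_j^\ast(t))\in\Hat\partial g^j_t(\x_j(t),\y_j(t))$ with $\|\x_j^\ast(t)\|\le 2\epsilon_j(t)/\tau_j$. The fuzzy sum rule decomposes this, up to a small error, as a regular subgradient of $\varphi_t$ \emph{plus} a vector in $\Hat N\big(\x_j(t);\mathbb{B}_{\tau_j}(u_j)\big)$. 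Nothing prevents $\x_j(t)$ from lying on the sphere $\partial\mathbb{B}_{\tau_j}(u_j)$ --- the lemma only guarantees $\x_j(t)\in\dom g^j_t$ --- and at such points the normal-cone contribution can be arbitrarily large, so the smallness of $\x_j^\ast(t)$ tells you nothing about the $\varphi_t$-subgradient you actually need in (i) and (iii). Your remark about ``enlarging the balls'' does not rescue this: the balls are part of $g^j_t$, fixed \emph{before} the lemma is invoked; replacing them afterwards changes the function and destroys the $\epsilon_j(t)$-minimizer property that drove the estimates. A second, related gap: the pointwise near-minimizers $(\v_j,\w_j)$ produced by the interchange are not the near-optimal pair from your $I_{\tau_j}$ problem, and nothing forces $\|\w_j-\ooy\|_1$ to be small (the pointwise ball in $g^j_t$ only gives $\|\w_j(t)-\ooy(t)\|\le\eta$, a fixed number). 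The ``biting convergence of $\w_j$'' you invoke was established for the earlier minimizing sequence in the robustness step, not for these new selections, so assertion~(ii) for $\y_k$ is not secured.

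The paper avoids a direct $L^\infty$-robustness construction altogether. It applies Theorem~\ref{theoremsubdiferential} with $p=q=2$ to the modified integrand $\phi_t(v,w):=\varphi_t(v,w)+\delta_{\mathbb{B}_\epsilon(\ox)}(v)$; the indicator forces $\|\x_k-\ox\|_\infty\le\epsilon$ for free. The boundary obstruction is then handled \emph{a posteriori}: one isolates the bad set $A_k:=\{t:\|\x_k(t)-\ox\|=\epsilon\}$, uses the $L^2$-estimates to show $\mu(A_k)\to 0$ and $\int_{A_k}\|\x_k^\ast\|\dmu\le 2\epsilon$, and on $A_k$ redefines the selections via Lemma~\ref{lemmasubdeltaminimun} applied to $\varphi_t$ itself (no indicators) at the reference point $(\ox,\ooy(t))$, which --- after the nonnegativity normalization --- is an $\varepsilon(t)$-minimizer with $\varepsilon(t)=\varphi_t(\ox,\ooy(t))$ and $\int_{A_k}\varepsilon(t)\dmu\le\epsilon^2$. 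Gluing the two pieces yields the $L^\infty$-selections with all the required estimates.
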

\begin{proof}
To simplify the calculations, assume without loss of generality that $\ooy^\ast=0$ and that we have the condition
\begin{align}\label{lower_bound_assump2}
\varphi(t,v,w)\ge 0\;\text{ for all }\;(t,v,w)\in T\times\X\times\Y
\end{align}
by considering the shifted function given by (with no relabeling)
\begin{equation*}
\hat\varphi(t,v,w):=\varphi(t,v,w)-\langle\ooy^\ast(t),w-\ooy(t)\rangle+\hat{\nu}(t)+\delta_{\mathbb{B}_{\Hat\rho}(\ox)}(v).
\end{equation*}
We split the proof into four claims. Fix $\varepsilon\in(0,1)$ in what follows.\\[1ex]
{\bf Claim~1:} {\em There exist $x\in\X$ and integrable functions $\x\in{\Leb}^2({T},\X)$, $\x^*\in\Leb^2({T},\X)$, $\y\in\Leb^1({T},\Y)$, and  ${\y}_k^*\in{\Leb}^\infty({T},\Y)$ satisfying the conditions
\begin{enumerate}[leftmargin=0.4in,label=1.\arabic*),ref=1.\arabic*)]
\item[\bf(a1)] $\|{\x}(t)-\ox\|\le\epsilon$ for a.e.\ $t\in T$.
\item[\bf(b1)] $\mu(A)\le\epsilon$, where $A:=\big\{t\in{T}\;\big|\;\|{\x}(t)-x\|=\varepsilon\big\}$.
\item[\bf(c1)] $\big({\x}^*(t),{\y}^\ast(t)\big)\in\Hat\partial\varphi_t\big({\x}(t),{\y}(t)\big)$ for a.e.\ $t\in A^c$.
\item[\bf(d1)] $\|\ox-x\|\le\epsilon$, $\|\ooy-{\y}\|_1\le\epsilon$, $\|{\y}^\ast\|_\infty\le\epsilon$.
\item[\bf(e1)] $\disp\Big\|\int_T{\x}^*(t)\dmu-\ox^\ast\Big\|\le\epsilon$ and $\|{\x}^*\|_2\|{\x}-{x}\|_2\le\epsilon^2$.
\item[\bf(f1)] $\disp\int_T\big|\varphi_t\big({\x}(t),{\y}(t)\big)-\varphi_t\big(\ox,\ooy(t)\big)\big|\dmu\le\epsilon$.
\item[\bf(g1)] $\disp\int_{A}\varphi_t\big(\ox,\ooy(t)\big)\dmu\le\varepsilon^2$, $\disp\int_{A}\|{x}^*(t)\|\dmu\le 2\varepsilon$.
\end{enumerate}}
To verify this claim, consider the function $\phi(t,u,w):=\varphi(t,u,w)+\delta_{\mathbb{B}_\varepsilon(\ox)}(u)$ for which we clearly have $(\ox^\ast,0)\in\Hat{\partial} \Intf{{\phi}}(\ox,\ooy)$. Applying Theorem~\ref{theoremsubdiferential} to the latter function gives us sequences of vectors $x_k\in\X$ and measurable mappings $\x_k\in\textnormal{L}^2({T},\X)$,
${\x}_k^*\in\textnormal{L}^2({T},\X)$, ${\y}_k\in\textnormal{L}^1({T},\Y)$, and ${\y}_k^*\in\textnormal{L}^\infty({T},\Y)$ with
\begin{equation*}
\begin{array}{ll}
\big({\x}_k^*(t),{\y}_k^\ast(t)\big)\in\Hat\partial{\phi}_t\big({\x}_k(t),{\y}_k(t)\big)\;\mbox{ for a.e. }\;t\in T;\\
\|\ox-x_k\|\to 0,\;\disp\int_T\|\ox-{\x}_k(t)\|^2\dmu\to 0,\;\mbox{ and }\;\disp\int_T\|\ooy(t)-{\y}_k(t)\|\dmu  \to 0;\\
\Big\|\disp\int_T{\x}_k^*(t)\dmu-\ox^\ast\Big\|\to 0,\;\|{\y}_k^\ast\|_\infty\to 0,\;\mbox{ and }\;\|{\x}_k^*\|_2\|{\x}_k-{x}_k\|_2\to 0;\\
\disp\int_T\big|{\phi}_t\big({\x}_k(t),{\y}_k(t)\big)-{\phi}_t\big(\ox,\ooy(t)\big)\big|\dmu\to 0\;\mbox{ as }\;k\to\infty.
\end{array}
\end{equation*}
In particular, we have $({\x}_k^*(t),{\y_k}(t))\in\Hat{\partial}\varphi_t({\x}_k(t),{\y}(t) )$ for the original integrand $\ph$ whenever $\|{\x}_k(t)-\ox\|<\varepsilon$ as $k\in\N$. Defining further
the measurable sets $A_k:=\big\{t\in{T}\;\big|\;\|{\x}_k(t)-x\|=\varepsilon\big\}$ ensures that $\mu(A_k)\to 0$ as $k\to\infty$ by the convergence of $\{\x_k\}$ in $\textnormal{L}^2({T},\X)$. Then for all $k\in\N$ sufficiently large we get
\begin{equation*}
\begin{array}{ll}
\|\ox-x_k\|\le\varepsilon,\;\disp\int_T\|\ox-{\x}_k(t)\|^2\dmu\le\varepsilon,\;\disp\int_T\|\ooy(t)-{\y}_k(t)\|\dmu\le\varepsilon;\\
\Big\|\disp\int_T{\x}_k^*(t)\dmu-\ox^\ast\Big\|\le\varepsilon,\;\|{\y}_k^\ast\|_\infty\le\varepsilon,\;\|{\x}_k^*\|_2\|{\x}_k-{x}_k\|_2\le\varepsilon^2;\\
\disp\int_T\big|\varphi_t\big({\x}_k(t),{\y}_k(t)\big)-\varphi_t\big(\ox,\ooy(t)\big)|\dmu\le\varepsilon;\\
\disp\int_{A_k}\varphi_t\big(\ox,\ooy(t)\big)\dmu\le\varepsilon^2.
\end{array}
\end{equation*}
Remembering now the construction of the sets $A_k$ gives us the inequalities
\begin{align*}
{\varepsilon^2}&\geq\disp\int_{A_k}\|{\x}^*_k(t)\|\cdot\|{x}_k-{\x}_k(t)\|\dmu
\ge\disp\int_{A_k}\|{\x}^*_k(t)\|\big(\|\ox-{\x}_k(t)\|-\|\ox-{x}_k\|\big)\dmu\\
&\ge\disp\int_{A_k}\|{\x}^*_k(t)\|\Big(\varepsilon-\frac{\varepsilon}{2}\Big)\dmu=\frac{\varepsilon}{2}\disp\int_{A_k}\|{\x}^*_k(t)\|\dmu.
\end{align*}
Thus we arrive at the estimate
\begin{equation*}
\disp\int_{A_k}\|{x}^*_k(t)\|\dmu\le 2\varepsilon,
\end{equation*}
which implies in turn all the statements (a1)--(g1) of this claim by relabeling $A:=A_k$ and
$\big({\x}(t),{\y}(t),{\x}^*(t),{\y}(t)\big):=\big({\x}_k(t),{\y_k}(t),{\x}_k^*(t),{\y_k}(t)\big)$ on $T$.\\[1ex]
{\bf Claim~2:} {\em Defining $\varepsilon(t):=\varphi(t,\ox,\ooy(t))$, there exist measurable functions $\v$, $\v^\ast$, $\w$ and $\w^*$ such that for a.e.\ $t\in A$ we have}
\begin{enumerate}[leftmargin=0.4in,nolistsep,label=2.\arabic*),ref=2.\arabic*)]
\item[\bf(a2)] $\big(\v^\ast(t),\w^\ast(t)\big)\in\Hat\partial\varphi_t\big(\v(t),\w(t)\big)$.
\item[\bf(b2)] $\|\v(t)-\ox\|\le\varepsilon$ and $\|\w(t)-\ooy(t)\|\le{\varepsilon(t)/\epsilon}$.
\item[\bf(c2)]$\|\v^*(t)\|\le 2\varepsilon(t)/\varepsilon$ and $\|\w^\ast(t)\|\leq 2\epsilon$.
\item[\bf(d2)]$\big|\varphi_t\big(\v(t),\w(t)\big)-\varphi_t\big(\ox,\ooy(t)\big)\big|\le\varepsilon(t)$.
\end{enumerate}\vspace*{0.05in}
Indeed, recalling that $\varphi_t$ is nonnegative by \eqref{lower_bound_assump2} tells us that the point  $(\ox,\ooy(t))$ is an $\varepsilon(t)$-minimizer of the function $\varphi_t$. Denoting now $\lambda_1(t):=\epsilon$ and $\lambda_2(t):=\epsilon(t)/\epsilon$, we deduce all the claim statements (a2)--(d2) from Lemma~\ref{lemmasubdeltaminimun}.\\[1ex]
{\bf Claim~3:} {\em Consider the measurable functions
\begin{equation*}
\begin{aligned}
\tilde\x(t)&:={\x}(t)\1_{A^c}(t)+\v(t)\1_{A}, & & & \tilde\x^*(t)&:={\x}^\ast(t)\1_{A^c}(t)+\v^\ast(t)\1_{A},\\
\tilde\y(t)&:={\y}(t)\1_{A^c}(t)+\w(t)\1_{A}, & & & \tilde\y^*(t)&:={\y}^\ast(t)\1_{A^c}(t)+\w^\ast(t)\1_{A}.
\end{aligned}
\end{equation*}
Then $\tilde\x\in \Leb^\infty({T},\X)$, $\tilde\x^\ast\in{\Leb}^1({T},\X)$, $\tilde\y\in\Leb^1(T,\Y)$, $\tilde\y^\ast\in\Leb^\infty(T,\Y)$, and}
\begin{enumerate}[leftmargin=0.4in,nolistsep,label=3.\arabic*),ref=3.\arabic*)]
\item[\bf(a3)] $\big(\tilde\x^*(t),\tilde\y^\ast(t)\big)\in\Hat\partial\varphi_t\big(\tilde\x(t),\tilde\y(t)\big)$ for a.e.\ $t\in T$.
\item[\bf(b3)] $\|\ox-\tilde\x\|_\infty\le\epsilon$ and $\|\ooy-\tilde\y\|_1\le 2\epsilon$.
\item[\bf(c3)] $\disp\int_T\big|\varphi_t\big(\tilde \x(t),\tilde \y(t)\big)-\varphi_t\big(\ox,\ooy(t)\big)\big|\dmu\le\epsilon+\epsilon^2$.
\item[\bf(d3)] $\Big\|\disp\int_T\tilde\x^*(t)\dmu-\ox^\ast\Big\|\le 5\epsilon$ and $\disp\int_T\|\tilde \x^*(t)\|\cdot\|\tilde\x(t)-x\|\dmu\le 3\epsilon^2$.
\item[\bf(e3)] $ \|{\tilde\y}^\ast\|_\infty \le 2\epsilon$.
\end{enumerate}\vspace*{0.03in}
Indeed, it follows directly from the above constructions that $(\tilde\x^*(t),\tilde \y^\ast(t))\in\Hat{\partial}\varphi_t(\tilde\x(t),\tilde\y(t))$ for a.e.\ $t\in T$, $\|x-\tilde\x\|_{\infty}\le\varepsilon$, $\|\ooy-\tilde\y\|_1\le 2\epsilon$, and
\begin{align*}
\disp\int_{T}\big|\varphi_t\big(\tilde\x(t),\tilde\y(t)\big)-\varphi_t\big(\ox,\ooy(t)\big)\big|\dmu\le\disp\int_{A^c}\big|\varphi_t\big(\x(t),\y(t)\big)-\varphi_t\big(\ox,\ooy(t)\big)\big|\dmu\\
+\disp\int_{A}\big|\varphi_t\big(\v(t),\w(t)\big)-\varphi_t\big(\ox,\ooy(t)\big)\big|\dmu\le\epsilon+\epsilon^2,
\end{align*}
\begin{equation*}
\disp\int_T\|\tilde\x^*(t)\|\dmu=\disp\int_{A^c}\|{\x}^*(t)\|\dmu+\disp\int_{A}\|{\v}^*(t)\|\dmu\le\mu({T})^{1/2}\|{\x}^*\|_2 +2\epsilon.
\end{equation*}
This readily leads us to the following inequalities:
\begin{align*}
\left\|\disp\int_T\tilde\x^*(t)\dmu-x^*\right\|&\le\left\|\int_{T}{\x}^*(t)\dmu-x^*\right\|+\int_{A}\|{\v}^*(t)\|\dmu\\
&+\disp\int_{A}\|{\x}^*(t)\|\dmu\le\varepsilon+2\epsilon+2\epsilon=5\epsilon,
\end{align*}
\begin{align*}
\disp\int_T\|\tilde\x^*(t)\|\cdot\|{x}-\tilde\x(t)\|\dmu\le&\disp\int_{A^c}\|{\x}^*(t)\|\cdot\|{x}-{\x}(t)\|\dmu\\&+\disp\int_{A}\|{\v}^*(t)\|\cdot\|{x}-{\v}(t)\|\dmu\\
\le&\epsilon^2+\disp\int_{A}\|{\v}^*(t)\|\left(\|\ox-{x}\|+\|\ox-{\v}(t)\|\right)\dmu\\
\le&\epsilon^2+\disp\int_{A}\big(\epsilon(t)/\epsilon\big)\cdot\left(\epsilon+\epsilon \right)\dmu\le\epsilon^2+2\epsilon^2=3\epsilon^2.
\end{align*}
Finally, we arrive at the norm estimates $\|{\tilde\y}^\ast\|_\infty\le\max\{\|\y^\ast\|_\infty,\|\w^\ast\|_\infty\}\le 2\epsilon$, which justify condition (a3)--(e3) and thus end the verification of this claim.\\[2ex]
{\bf Claim~4:} {\em Completing the proof of the theorem.} To finalize the proof of assertions (i)--(iv), we unify the results of the above claims to construct the desired sequences therein along an arbitrarily sequence $\ve_k\dn 0$ as $k\to\infty$.
\end{proof}

To conclude the paper, we present the following consequence of Theorem~\ref{theorem_main_fuzzy_sub} that provides a sequential calculus rule for {\em every} regular subgradient of the extended-integral functional under a certain local growth condition.

\begin{corollary}[\bf stronger convergence under another growth condition]\label{str-conv} Consider the expected-integral functional $\Intf{\varphi}$ generated in \eqref{eif} by a normal integrand $\varphi\colon\T\times\X\times\Y\to\Rex$, and let $(\ox,\ooy)$ be a point satisfying assumption \eqref{convex_cond}. In addition, suppose that there exist $\hat\nu,\hat\kappa\in\Leb^1(T,\R_+)$, and $\hat M,\hat r>0$ such that for all $(t,v,w)\in T\times\mathbb{B}_{\hat r}(\ox)\times\Y$ we have
\begin{equation*}
\varphi_t(v, w)\ge-\hat M\|w\|-\hat\nu(t),\text{ and }\;\dom\varphi_t(v,\cdot)\subset\hat\kappa(t)\mathbb{B},
\end{equation*}
Then for all $(\ox^*,\ooy^\ast)\in\Hat{\partial}\Intf{\varphi}(\ox,\ooy)$ there exist sequences $\{x_k\}\subset\X$, $\{\x_k\}\subset\Leb^\infty({T},\X)$, $\{\x_k^\ast\}\subset{\Leb}^1({T},\X)$, $\{\y_k\}\subset\Leb^1(T,\Y)$, $\{\y_k^\ast\}\subset\Leb^\infty(T,\Y)$ with:
\begin{enumerate}[label=\alph*),ref=\alph*)]
\item[\bf(i)] $\big(\x_k^*(t),\y_k^\ast(t)\big)\in\Hat\partial\varphi_t\big(\x_k(t),\y_k(t)\big)$ for a.e.\ $t\in T$ and all $k\in\N$.
\item[\bf(ii)] $\|\ox-x_k\|\to 0$, $\|\ox-\x_k\|_\infty\to 0$,   $\|\ooy-\y_k\|_1\to 0$, $\|\y_k^\ast-\ooy \|_\infty\to 0$ as $k\to\infty$.
\item[\bf(iii)] $\disp\Big\|\int_T\x_k^*(t)\dmu-\ox^\ast\Big\|\to 0$,  $\disp\int_T\|\x_k^*(t)\|\cdot\|\x_k(t)-x_k\|\dmu\to 0$ as $k\to\infty$.
\item[\bf(iv)] $\disp\int_T\big|\varphi_t\big(\x_k(t),\y_k(t)\big)-\varphi_t\big(\ox,\ooy(t)\big)\big|\dmu\to 0$ as $k\to\infty$
\end{enumerate}
\end{corollary}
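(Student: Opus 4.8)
The plan is to derive the corollary directly from Theorem~\ref{theorem_main_fuzzy_sub} by showing that the growth hypotheses imposed here imply condition~\eqref{growth_2} for the given regular subgradient $(\ox^*,\ooy^*)$; no new variational machinery will be needed. We may assume that $\Hat\partial\Intf{\varphi}(\ox,\ooy)\ne\emp$, so that $(\ox,\ooy)\in\dom\Intf{\varphi}$, since otherwise the statement is vacuous.

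First I would record two elementary facts. Since $(\ox^*,\ooy^*)$ is a regular subgradient of $\Intf{\varphi}$, by construction $\ooy^*\in\Leb^\infty(T,\Y)$, and hence $\|\ooy^*\|_\infty<\infty$. Next, applying the first assumed inequality with $v=\ox$ and $w=\ooy(t)$ shows that $t\mapsto\varphi_t(\ox,\ooy(t))$ is bounded below by the integrable function $-\hat M\|\ooy(\cdot)\|-\hat\nu(\cdot)$; as its integral $\Intf{\varphi}(\ox,\ooy)$ is finite, this function is finite for a.e.\ $t\in T$, so by the inclusion $\dom\varphi_t(\ox,\cdot)\subset\hat\kappa(t)\mathbb{B}$ we get $\|\ooy(t)\|\le\hat\kappa(t)$ for a.e.\ $t\in T$.

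The main step is the verification of~\eqref{growth_2}. Fix $v\in\mathbb{B}_{\hat r}(\ox)$, $w\in\Y$, and $t\in T$; the inequality there is trivial when $\varphi_t(v,w)=\infty$, so suppose $w\in\dom\varphi_t(v,\cdot)$, which forces $\|w\|\le\hat\kappa(t)$. Bounding the linear term by $|\langle\ooy^*(t),w-\ooy(t)\rangle|\le\|\ooy^*\|_\infty(\|w\|+\|\ooy(t)\|)\le 2\|\ooy^*\|_\infty\hat\kappa(t)$ and using $\varphi_t(v,w)\ge-\hat M\hat\kappa(t)-\hat\nu(t)$, I would conclude that
\begin{equation*}
\varphi_t(v,w)-\langle\ooy^*(t),w-\ooy(t)\rangle\ge-\Tilde\nu(t),\qquad\Tilde\nu(t):=(\hat M+2\|\ooy^*\|_\infty)\hat\kappa(t)+\hat\nu(t)+1.
\end{equation*}
Since $\hat\kappa,\hat\nu\in\Leb^1(T,\R_+)$, $\|\ooy^*\|_\infty<\infty$, and $\mu(T)<\infty$, the function $\Tilde\nu\colon T\to(0,\infty)$ is integrable and strictly positive, so \eqref{growth_2} holds with $\hat\rho:=\hat r$ and $\hat\nu$ replaced by $\Tilde\nu$.

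Finally, with \eqref{convex_cond} in force by hypothesis, $(\ox^*,\ooy^*)\in\Hat\partial\Intf{\varphi}(\ox,\ooy)$, and \eqref{growth_2} just established, Theorem~\ref{theorem_main_fuzzy_sub} applies and yields sequences $\{x_k\}$, $\{\x_k\}\subset\Leb^\infty(T,\X)$, $\{\x_k^*\}\subset\Leb^1(T,\X)$, $\{\y_k\}$, $\{\y_k^*\}$ with exactly properties (i)--(iv) stated in the corollary. I do not anticipate a genuine obstacle: the point of the reduction is that the new growth data do not involve $(\ox^*,\ooy^*)$, so \eqref{growth_2} is verified uniformly in the subgradient, which is precisely what upgrades the conclusion to hold for \emph{every} $(\ox^*,\ooy^*)\in\Hat\partial\Intf{\varphi}(\ox,\ooy)$; the only mild care needed is the a.e.\ finiteness argument that makes the bound on the linear term integrable.
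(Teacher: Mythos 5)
Your proposal is correct and follows essentially the same route as the paper: both reduce the corollary to Theorem~\ref{theorem_main_fuzzy_sub} by using the domain bound $\dom\varphi_t(v,\cdot)\subset\hat\kappa(t)\mathbb{B}$ together with $\ooy^\ast\in\Leb^\infty(T,\Y)$ to produce an integrable minorant verifying \eqref{growth_2} with $\hat\rho=\hat r$. Your version is in fact slightly more careful than the paper's (you justify $\|\ooy(t)\|\le\hat\kappa(t)$ a.e.\ before bounding the linear term, and add the constant to keep the minorant strictly positive), but these are refinements of the same argument, not a different one.
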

\begin{proof}
Using the imposed growth condition, we have that the estimate
\begin{align*}
\inf\limits_{v\in\mathbb{B}_{\hat r}(\ox),\;\w\in\Y}\left\{\varphi_t(v,w)-\langle\ooy^\ast(t),w-\ooy(t)\rangle\right\}\ge&-\nu(t),\quad t\in T,
\end{align*}
where $ \nu(t):=\hat M\hat{\kappa}(t)+\hat{\nu}(t)+\|\ooy^\ast\|_\infty\hat{\kappa}(t)+\|\ooy^\ast\|_\infty\|\ooy\|_1$ is an integrable function. We see that $\varphi$ satisfies both assumptions   \eqref{convex_cond} and \eqref{growth_2}. Applying then Theorem~\ref{theorem_main_fuzzy_sub} gives us all the assertions (i)--(iv) of the corollary.
\end{proof}\\
{\bf Acknowledgments}. The authors are grateful to anonymous referees for their helpful remarks that allowed us to improve the original presentation.\vspace*{-0.02in}

\end{document}